\documentclass{article} %
\usepackage{arxiv,times}

\usepackage{amsmath,amsfonts,bm}

\def\eqref#1{(\ref{#1})}

\def\floor#1{\lfloor #1 \rfloor}
\def\1{\bm{1}}

\def\vx{{\bm{x}}}
\def\vy{{\bm{y}}}
\def\vz{{\bm{z}}}

\DeclareMathAlphabet{\mathsfit}{\encodingdefault}{\sfdefault}{m}{sl}
\SetMathAlphabet{\mathsfit}{bold}{\encodingdefault}{\sfdefault}{bx}{n}

\DeclareMathOperator*{\argmin}{arg\,min}

\usepackage{hyperref}       %
\usepackage{url}            %
\usepackage{booktabs}       %
\usepackage{amsfonts}       %
\usepackage{nicefrac}       %
\usepackage{microtype}      %
\usepackage{xcolor}         %
\usepackage{amsthm, amssymb, amsfonts, latexsym, mathtools}
\usepackage{algorithm}
\usepackage{algpseudocode}
\usepackage{comment}
\usepackage{here}
\usepackage{caption}
\usepackage{subcaption}
\usepackage{wrapfig}
\usepackage{multirow}
\usepackage{color}
\usepackage{wrapfig}
\usepackage[normalem]{ulem}
\usepackage{multirow}
\usepackage{thmtools}

\newtheorem{theorem}{Theorem}
\newtheorem{lemma}{Lemma}
\newtheorem{proposition}{Proposition}

\newtheorem{assumption}{Assumption}

\usepackage{cleveref}

\crefname{section}{Sec.}{Secs.}
\Crefname{section}{Section}{Sections}
\crefname{subsection}{Sec.}{Secs.}
\Crefname{subsection}{Section}{Sections}
\crefname{subsubsection}{Sec.}{Secs.}
\Crefname{subsubsection}{Section}{Sections}
\crefname{appendix}{App.}{Apps.}
\Crefname{appendix}{Appendix}{Appendices}

\crefname{figure}{Fig.}{Figs.}
\Crefname{figure}{Figure}{Figures}
\crefname{table}{Table}{Tables}
\Crefname{table}{Table}{Tables}

\crefname{equation}{Eq.}{Eqs.}
\Crefname{equation}{Equation}{Equations}

\crefname{theorem}{Theorem}{Theorems}
\Crefname{theorem}{Theorem}{Theorems}

\crefname{lemma}{Lemma}{Lemmas}
\Crefname{lemma}{Lemma}{Lemmas}

\crefname{proposition}{Proposition}{Propositions}
\Crefname{proposition}{Proposition}{Propositions}

\crefname{corollary}{Corollary}{Corollaries}
\Crefname{corollary}{Corollary}{Corollaries}

\crefname{assumption}{Assumption}{Assumptions}
\Crefname{assumption}{Assumption}{Assumptions}

\crefname{definition}{Def.}{Defs.}
\Crefname{definition}{Definition}{Definitions}

\crefname{remark}{Remark}{Remarks}
\Crefname{remark}{Remark}{Remarks}

\crefname{example}{Ex.}{Exs.}
\Crefname{example}{Example}{Examples}

\usepackage[colorinlistoftodos,bordercolor=orange,backgroundcolor=orange!20,linecolor=orange,textsize=scriptsize]{todonotes}

\title{Last-Iterate Convergence Rate of Normalized Gradient Descent under H\"older Smoothness}

\iclrfinalcopy

\author{Yuki Takezawa \\
Toyota Motor Corporation \\
\nolinkurl{yuki\_takezawa@mail.toyota.co.jp}
\And
Eduard Gorbunov \\
MBZUAI \\
\nolinkurl{eduard.gorbunov@mbzuai.ac.ae}
}

\begin{document}

\maketitle

\begin{abstract}

Normalized gradient descent is a widely studied adaptive optimization method. Most existing analyses focus on the best iterate or a weighted average of the iterates, whereas practical implementations typically return the last iterate. In this paper, we study the last-iterate convergence of normalized gradient descent for convex, $(\nu,M_\nu)$-H\"older-smooth objectives. For a constant stepsize, we establish an upper bound of $\mathcal{O}\bigl((\log^2(T)/T)^{(1+\nu)/2}\bigr)$, which contains a logarithmic overhead relative to the known $\mathcal{O}\bigl(T^{-(1+\nu)/2}\bigr)$ guarantees for the best and weighted-average iterates. For $\nu = 0$, this overhead is known to be unavoidable. We complement this analysis with numerical results based on the performance estimation problem (PEP), investigating the finite-horizon worst-case behavior in the smooth setting and whether the logarithmic overhead reflects an intrinsic limitation of constant-step normalized gradient descent. We then show that a linearly decreasing stepsize yields a last-iterate guarantee of $\mathcal{O}\bigl(T^{-(1+\nu)/2}\bigr)$, matching the order of the best-iterate/weighted-average guarantees without requiring knowledge of $\nu$ and $M_\nu$.

\end{abstract}

\section{Introduction}
\label{sec:introduction}

Neural networks have achieved remarkable empirical success across a wide range of applications, including computer vision \citep{russakovsky2015imagenet,radford2021learning,liu2021swin} and natural language processing \citep{bert2019devlin,brown2020language,deepseekai2024deepseekv2}.
Training such networks is challenging: their loss landscapes are often ill-conditioned, with curvature that varies widely across parameters and during training, which makes the performance of optimization methods highly sensitive to the choice of stepsize.
Adaptive optimization methods, which scale their updates using observed gradients, are designed to mitigate this sensitivity.
Building on AdaGrad \citep{duchi2011adagrad}, Adam and its variant AdamW \citep{kingma2015adam,loshchilov2018decoupled} have become the de facto standard for training deep neural networks, and Muon \citep{jordan2024muon,liu2025muon} has recently emerged as a strong alternative.

One of the most basic adaptive optimization methods is normalized gradient descent.
Despite its simple form, it is closely related to modern optimizers: without momentum (and with $\epsilon = 0$), Adam reduces to sign descent and Muon to orthogonalized gradient descent, and all three methods are instances of normalized steepest descent with respect to the $\ell_\infty$, spectral, and Euclidean norms, respectively \citep{bernstein2024old}.
Adam and Muon are among the most widely used optimizers for training large-scale models \citep{brown2020language,deepseekai2024deepseekv2,liu2025muon}.
Moreover, normalized gradient descent is universal: the same stepsize choice yields appropriate convergence rates across a wide range of problem classes, including smooth \citep{mishkin2024directional}, non-smooth \citep{nesterov2018lectures}, H\"older-smooth \citep{grimmer2019convergence,orabona2023normalized}, and $(L_0, L_1)$-smooth objectives \citep{vankov2025optimizing}.
However, most existing analyses bound the \textit{best} iterate or a \textit{weighted average} of the iterates, whereas practical implementations typically return the \textit{last} iterate.
This mismatch has motivated a growing body of work on last-iterate convergence across a wide range of optimization methods \citep{shamir2013stochastic,liu2024revisiting,zamani2024exact,zamani2025exact,defazio2024optimal}.
To the best of our knowledge, however, the last iterate of normalized gradient descent has so far been analyzed only by \citet{zamani2025exact}, and only for convex functions with bounded (sub)gradients, leaving open its behavior on problem classes that admit unbounded gradients.

In this paper, we analyze the last-iterate convergence rate of normalized gradient descent for convex\footnote{Although many practically important problems are nonconvex, understanding the convex regime remains important; see \cref{sec:importance_of_convexity} for a detailed discussion.} and H\"older-smooth objectives, a general smoothness class that includes both smooth and non-smooth functions as special cases.
We first show that, with a constant stepsize, the last iterate converges slower than the best and weighted-average iterates by a factor of order $\log^{1+\nu}(T)$, where $T$ is the number of iterations.
This overhead cannot be removed in general: for $\nu = 0$, our bound matches the lower bound of \citet{zamani2025exact} for the same constant stepsize, and for the smooth case ($\nu = 1$), numerical results based on the performance estimation problem (PEP) \citep{drori2014performance,taylor2017smooth} indicate that the last-iterate rate is indeed slower than $\mathcal{O}(1/T)$.
We then show that a linearly decreasing stepsize removes this overhead, yielding a last-iterate rate that matches the best-iterate and weighted-average guarantees.
\Cref{table:summary} compares our results with the existing ones.

Our contributions are summarized as follows.
\begin{itemize}
    \item \textbf{Last-iterate rate with a constant stepsize.} For convex and $(\nu, M_\nu)$-H\"older-smooth objectives, we prove that the last iterate of normalized gradient descent with a constant stepsize satisfies $f(\vx_T) - f(\vx^\star) = \mathcal{O}\bigl(M_\nu (\|\vx_1 - \vx^\star\|^2/T)^{(1+\nu)/2} (1 + \log T)^{1+\nu}\bigr)$ (\cref{theorem:convergenec_rate_of_last}). This rate is slower than the known best-iterate and weighted-average rates by a factor of order $\log^{1+\nu}(T)$. For $\nu = 0$, it recovers the result of \citet{zamani2025exact}, which is tight in this case.
    \item \textbf{Numerical evidence via PEP.} Using the performance estimation problem (PEP), we provide numerical evidence that, in the smooth case ($\nu = 1$), the last-iterate rate of normalized gradient descent with a constant stepsize is slower than $\mathcal{O}(1/T)$. This suggests that the logarithmic overhead is not an artifact of our analysis.
    \item \textbf{Removing the overhead with a decreasing stepsize.} We show that a linearly decreasing stepsize yields the last-iterate rate $\mathcal{O}\bigl(M_\nu (\|\vx_1 - \vx^\star\|^2/T)^{(1+\nu)/2}\bigr)$ (\cref{theorem:convergenec_rate_of_last_with_adaptive_stepsize}), matching the best-iterate and weighted-average guarantees. This stepsize does not require knowledge of $\nu$ or $M_\nu$, and for $\nu = 0$ the result recovers that of \citet{zamani2025exact}.
\end{itemize}

\paragraph{Notation.}
For $\vx, \vy \in \mathbb{R}^d$, $\langle \vx, \vy \rangle$ denotes the standard inner product and $\|\vx\| \coloneqq \sqrt{\langle \vx, \vx \rangle}$ the Euclidean norm.
$T$ denotes the number of iterations, and $\log$ denotes the natural logarithm. In formal statements (theorems, lemmas, and propositions), where the dependence on all problem parameters is written explicitly, $\mathcal{O}(\cdot)$ hides only absolute numerical constants, i.e., constants independent of $T$, $\nu$, $M_\nu$, and $\|\vx_1 - \vx^\star\|$.
In informal discussions, such as $\mathcal{O}(1/T)$ or $\mathcal{O}(T^{-\frac{1+\nu}{2}})$, we use $\mathcal{O}(\cdot)$ to indicate the dependence on $T$ only, treating the remaining parameters as constants.

\begin{table}[t]
\caption{Summary of the convergence rates for normalized gradient descent assuming that $f$ is convex and $(\nu, M_\nu)$-H\"older smooth. Numerical constants are omitted in the rates.}
\label{table:summary}
\centering
\resizebox{\textwidth}{!}{%
\begin{tabular}{llcccc}
\toprule
 & Reference & Convergence Criteria & $\nu$ & Stepsize & Rate \\
\midrule
\multirow{5}{*}{\parbox{1.5cm}{\centering constant\\stepsize}}
& \citet{grimmer2019convergence} & $\min_t f (\vx_t) - f (\vx^\star)$ & $\nu\in[0,1]$ & $\tfrac{\| \vx_1 - \vx^\star\|}{\sqrt{T}}$ & $M_\nu \left( \tfrac{\| \vx_1 - \vx^\star\|}{\sqrt{T}} \right)^{1 + \nu}$ \\
& \citet{orabona2023normalized} & $f \left( \tfrac{1}{\sum_{t=1}^T \frac{1}{\| \nabla f (\vx_t) \|}} \sum_{t=1}^T \tfrac{\vx_t}{\| \nabla f (\vx_t) \|} \right) - f (\vx^\star)$ & $\nu \in [0,1]$ & $\tfrac{\| \vx_1 - \vx^\star\|}{\sqrt{T}}$ & $M_\nu \left( \tfrac{\| \vx_1 - \vx^\star\|}{\sqrt{T}} \right)^{1 + \nu}$ \\
& \citet{zamani2025exact} & $f (\vx_T) - f (\vx^\star)$ & $\nu=0$ & $\tfrac{\| \vx_1 - \vx^\star\|}{\sqrt{T}}$ & $\tfrac{M_\nu \| \vx_1 - \vx^\star\|}{\sqrt{T}} (1 + \log (T))$ \\
& \Cref{theorem:convergenec_rate_of_last} \textbf{(new)}& $f (\vx_T) - f (\vx^\star)$ & $\nu\in[0,1]$ & $\tfrac{\| \vx_1 - \vx^\star\|}{\sqrt{T}}$ & $M_\nu \left( \tfrac{\| \vx_1 - \vx^\star\|}{\sqrt{T}} \right)^{1 + \nu} (1 + \log (T))^{1 + \nu}$ \\
\midrule
\multirow{3}{*}{\parbox{1.8cm}{\centering linearly\\decreasing\\stepsize}}
& \citet{zamani2025exact} & $f (\vx_T) - f (\vx^\star)$ & $\nu=0$ & $\tfrac{\| \vx_1 - \vx^\star\|}{\sqrt{T}} \left( 1 - \tfrac{t}{T} \right)$ & $\tfrac{M_\nu \| \vx_1 - \vx^\star\|}{\sqrt{T}}$ \\
& \Cref{theorem:convergenec_rate_of_last_with_adaptive_stepsize} \textbf{(new)} & $f (\vx_T) - f (\vx^\star)$ & $\nu\in[0,1]$ & $\tfrac{\| \vx_1 - \vx^\star\|}{\sqrt{T}} \left( 1 - \tfrac{t}{T+1} \right)$ & $M_\nu \left( \tfrac{\| \vx_1 - \vx^\star\|}{\sqrt{T}} \right)^{1 + \nu}$ \\
\bottomrule
\end{tabular}}

\vskip - 0.2 in
\end{table}

\section{Related Work}
\label{sec:related_work}

\paragraph{Convergence Analysis of Normalized Gradient Descent.}
Normalized gradient descent and its variants are among the simplest adaptive optimization methods and have been studied in a variety of settings \citep{nesterov2018lectures,grimmer2019convergence,cutkosky2020momentun,orabona2023normalized,mishkin2024directional,vankov2025optimizing}.
Here, we focus on the convex, deterministic setting considered in this paper.
Most existing analyses concern the best iterate: \citet{nesterov2018lectures} and \citet{mishkin2024directional} cover both smooth and non-smooth objectives, \citet{grimmer2019convergence} considers H\"older-smooth objectives, and \citet{vankov2025optimizing} considers $(L_0, L_1)$-smooth objectives.
\citet{orabona2023normalized} analyzes a weighted average of the iterates instead.
However, practical implementations typically return the last iterate rather than the best or an averaged one.
To the best of our knowledge, \citet{zamani2025exact} is the only prior work on the last-iterate convergence of normalized gradient descent.
For convex functions with bounded subgradients, they showed that, with the constant stepsize $\|\vx_1 - \vx^\star\|/\sqrt{T}$, the last iterate converges at the rate $\mathcal{O}(\log T / \sqrt{T})$, which is slower than the $\mathcal{O}(1/\sqrt{T})$ best-iterate rate, and they established a matching lower bound for this stepsize; moreover, they showed that even the optimally tuned constant stepsize only reduces the overhead to $\sqrt{\log T}$, so that a logarithmic loss cannot be avoided with any constant stepsize. 
They further showed that a linearly decreasing stepsize improves the last-iterate rate to $\mathcal{O}(1/\sqrt{T})$.
However, the bounded-subgradient assumption is restrictive: it excludes, for example, smooth objectives such as quadratics, whose gradients are unbounded on $\mathbb{R}^d$.
In contrast, we analyze the last iterate under H\"older smoothness, which covers both smooth and non-smooth objectives, and our results recover those of \citet{zamani2025exact} as the special case $\nu = 0$.

\paragraph{Last-Iterate Convergence of Optimization Methods.}
Convergence analyses of optimization methods have traditionally bounded the best or an averaged iterate, whereas practical implementations typically return the last iterate.
Motivated by this gap between theory and practice, a growing body of work analyzes the last iterate directly, including stochastic gradient descent \citep{shamir2013stochastic,liu2024revisiting}, subgradient methods \citep{zamani2025exact} and their variant with the Polyak stepsize \citep{zamani2024exact}, shuffling gradient methods \citep{liu2024on}, momentum methods \citep{li2022on}, and AdaGrad-Norm \citep{preobrazhenskaia2026last}.
A recurring finding is that, under standard stepsize choices, the last iterate can converge more slowly than the best or averaged iterate, typically by a logarithmic factor \citep{shamir2013stochastic,zamani2025exact}, and this loss is unavoidable for several standard stepsize choices \citep{harvey2019tight,kornowski2026gradient}, and that suitably decaying stepsize schedules can close this gap \citep{jain2019making,zamani2025exact,defazio2024optimal}.
In this direction, \citet{defazio2024optimal} proposed a general framework that converts regret bounds of online learning algorithms into last-iterate guarantees on $f(\vx_T) - f(\vx^\star)$, which in particular justifies linear-decay schedules. However, this framework does not directly apply to normalized gradient descent: its regret bound holds for the normalized gradients $\nabla f(\vx_t)/\|\nabla f(\vx_t)\|$ rather than the gradients themselves, and under H\"older smoothness the gradient norms are not bounded away from zero.

\section{Preliminaries}
In this section, we introduce the problem setting and assumptions, and then summarize the existing guarantees for normalized gradient descent. 
We consider the unconstrained minimization problem
\begin{align}
\label{eq:problem}
    \min_{\vx \in \mathbb{R}^d} f (\vx),
\end{align}
and assume that it admits a minimizer $\vx^\star$. We study normalized gradient descent:
\begin{align}
    \vx_{t+1} = \vx_t - \tilde{\eta}_t \frac{\nabla f (\vx_t)}{\| \nabla f (\vx_t)\|},
\end{align}
where $\tilde{\eta}_t > 0$ is a possibly time-varying stepsize.
Since $\| \vx_{t+1} - \vx_t \| = \tilde{\eta}_t$, the stepsize $\tilde{\eta}_t$ determines the length of each step regardless of the gradient magnitude, whereas $\eta_t$ denotes the resulting effective stepsize applied to $\nabla f(\vx_t)$.
If $\nabla f(\vx_t) = 0$ for some $t$, then $\vx_t$ is a minimizer by convexity; hence, without loss of generality, we assume that $\nabla f(\vx_t) \neq 0$ for all $t$.

Throughout this paper, we assume that the objective $f$ is convex and H\"older smooth, which are standard assumptions in the optimization literature \citep{nesterov2016universal}.
\begin{assumption}[Convexity]
\label{assumption:convex}
For any $\vx, \vy \in \mathbb{R}^d$, it holds that
\begin{align*}
    f (\vy) \geq f (\vx) + \langle \nabla f (\vx), \vy - \vx \rangle.
\end{align*}
\end{assumption}

\begin{assumption}[($\nu, M_\nu$)-H\"older Smoothness]
\label{assumption:holder_smoothness}
There exist $\nu \in [0, 1]$ and $M_\nu > 0$ such that, for any $\vx, \vy \in \mathbb{R}^d$,
\begin{align}
\label{eq:holder_smooth}
    \| \nabla f (\vx) - \nabla f (\vy) \| \leq M_\nu \| \vx - \vy \|^\nu.
\end{align}
\end{assumption}
H\"older smoothness interpolates between non-smooth and smooth objectives.
For $\nu = 1$, \cref{eq:holder_smooth} is the standard $M_1$-smoothness assumption, i.e., Lipschitz continuity of the gradient, which allows the gradient norm to grow without bound, as for quadratic functions.
For $\nu = 0$, \cref{eq:holder_smooth} is essentially the bounded-gradient assumption: it holds with $M_0 = 2G$ whenever $\| \nabla f(\vx) \| \leq G$ for all $\vx$, and, conversely, taking $\vy = \vx^\star$ gives $\| \nabla f(\vx) \| \leq M_0$.
In this case, $f$ need not be differentiable, and $\nabla f(\vx)$ denotes an arbitrary subgradient of $f$ at $\vx$, with $\nabla f(\vx^\star) = 0$.
For $\nu = 0$, our proofs use only the subgradient inequality of \cref{assumption:convex}, the bound $\|\nabla f(\vx)\| \leq M_0$, and the upper bound $f(\vy) \leq f(\vx) + \langle \nabla f(\vx), \vy - \vx \rangle + M_0 \|\vy - \vx\|$, all of which hold for subgradients of convex functions with $M_0/2$-bounded subgradients; hence our results for $\nu = 0$ apply to the normalized subgradient method on non-smooth convex functions, which is the setting of \citet{zamani2025exact}.

\subsection{Existing Analysis of Normalized Gradient Descent}
As discussed in \cref{sec:related_work}, most existing analyses of normalized gradient descent bound either the best iterate or a weighted average of the iterates.
Under H\"older smoothness, the following two guarantees are known (see also \cref{table:summary}).

\begin{proposition}[\citet{grimmer2019convergence}]
\label{proposition:best_iterate}
Suppose that \Cref{assumption:convex,assumption:holder_smoothness} hold. Then, for any $T \geq 1$, normalized gradient descent with $\tilde{\eta}_t = \frac{\| \vx_1 - \vx^\star\|}{\sqrt{T}}$ satisfies:
\begin{align}
    \min_{t \in \{1,2,\dots,T\}} f (\vx_t) - f (\vx^\star) \leq \mathcal{O} \left( M_\nu \left( \frac{\| \vx_1 - \vx^\star\|^2}{T} \right)^\frac{1 + \nu}{2} \right).
\end{align}
\end{proposition}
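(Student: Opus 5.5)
The plan is the standard distance-to-optimum argument for normalized steps, combined with a bound that turns an inner product with the unit normalized gradient into a function-value gap via H\"older smoothness. Write $D \coloneqq \|\vx_1 - \vx^\star\|$, $\eta \coloneqq D/\sqrt{T}$, and $\vg_t \coloneqq \nabla f(\vx_t)/\|\nabla f(\vx_t)\|$. Expanding the update gives
\begin{align*}
\|\vx_{t+1}-\vx^\star\|^2 = \|\vx_t-\vx^\star\|^2 - 2\eta \langle \vg_t, \vx_t - \vx^\star\rangle + \eta^2 .
\end{align*}
Summing over $t=1,\dots,T$ and using $\|\vx_{T+1}-\vx^\star\|^2\ge 0$ yields
\begin{align*}
\sum_{t=1}^T \langle \vg_t, \vx_t-\vx^\star\rangle \le \frac{D^2 + T\eta^2}{2\eta} = D\sqrt{T}.
\end{align*}
Since every summand is nonnegative by convexity, there is an index $t$ with $r_t \coloneqq \langle \vg_t, \vx_t-\vx^\star\rangle \le D/\sqrt{T}$.

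The key step is to show that $f(\vx_t)-f(\vx^\star) \le \mathcal{O}(M_\nu\, r_t^{1+\nu})$, where $r_t$ is the distance from $\vx^\star$ to the supporting hyperplane $H_t \coloneqq \{\vy : \langle \nabla f(\vx_t), \vy - \vx_t\rangle = 0\}$. To prove it, let $\vy^\star$ be the orthogonal projection of $\vx^\star$ onto $H_t$, so that $\|\vy^\star - \vx^\star\| = r_t$. By convexity, $f(\vy^\star) \ge f(\vx_t) + \langle \nabla f(\vx_t), \vy^\star - \vx_t\rangle = f(\vx_t)$. The H\"older upper bound, expanded around the minimizer where $\nabla f(\vx^\star)=0$, gives
\begin{align*}
f(\vy^\star) \le f(\vx^\star) + \frac{M_\nu}{1+\nu}\|\vy^\star-\vx^\star\|^{1+\nu}.
\end{align*}
Combining the two inequalities,
\begin{align*}
f(\vx_t) - f(\vx^\star) \le \frac{M_\nu}{1+\nu} r_t^{1+\nu} \le M_\nu \Bigl(\frac{D^2}{T}\Bigr)^{\frac{1+\nu}{2}}
\end{align*}
for the chosen $t$. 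The minimum over $t$ is at most this value, which proves the claim.

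The step needing most care is this key inequality, in particular the case $\nu=0$. There the bound $f(\vy)\le f(\vx^\star)+M_0\|\vy-\vx^\star\|$ follows from $\|\nabla f\|\le M_0$, which the paper already notes is implied by the assumption with $\vy=\vx^\star$. This is consistent with the paper's remark that only the subgradient inequality and this upper bound are used, so the argument goes through with subgradients. The rest is routine.
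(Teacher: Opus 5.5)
Your proof is correct. The telescoping gives $\sum_{t=1}^T r_t \le \|\vx_1-\vx^\star\|\sqrt{T}$. Convexity makes $r_t \ge 0$, so $r_t$ really is the distance from $\vx^\star$ to the supporting hyperplane at $\vx_t$. The projection trick, combined with \cref{lemma:holder_smooth} at $\vx^\star$ (where $\nabla f(\vx^\star)=0$), then gives $\min_t f(\vx_t)-f(\vx^\star) \le \frac{M_\nu}{1+\nu}\bigl(\|\vx_1-\vx^\star\|^2/T\bigr)^{\frac{1+\nu}{2}}$. Your handling of $\nu=0$ matches the paper's own remark on which inequalities survive for subgradients.

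The paper does not reprove this proposition; it quotes it from \citet{grimmer2019convergence}, whose argument is essentially your hyperplane-distance one. The closest argument the paper does carry out is the bound on block $\mathcal{I}_0$ in the proof of \cref{lemma:upper_bound_of_phi}. It uses the same telescoping but a different conversion step. \Cref{lemma:difference_of_function_value} bounds $\Phi(\vx)=(f(\vx)-f^\star)^{\frac{1}{1+\nu}}$ by $2M_\nu^{\frac{1}{1+\nu}}\langle \nabla f(\vx)/\|\nabla f(\vx)\|, \vx-\vx^\star\rangle$, using convexity and the gradient-norm bound of \cref{lemma:bound_of_grad_norm}. It then takes a stepsize-weighted average of $\Phi$ rather than first picking the index minimizing $r_t$.

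Each route buys something different.
\begin{itemize}
\item \emph{Your route} gives a sharper constant: $\frac{1}{1+\nu}$ instead of the $2^{1+\nu}$ that \cref{lemma:difference_of_function_value} yields after raising to the power $1+\nu$. It also uses only the growth bound $f(\vy)-f^\star \le \frac{M_\nu}{1+\nu}\|\vy-\vx^\star\|^{1+\nu}$ around the minimizer, not smoothness near the iterates.
\item \emph{The paper's route} extends to differences $\Phi(\vx)-\Phi(\vy)$ for any $\vy$ with $f(\vy)\le f(\vx)$. That is exactly what \cref{lemma:difference_of_function_values} needs to chain consecutive blocks. Your projection trick does not extend this way: expanding $f$ around a non-minimizer $\vy$ introduces the linear term $\langle \nabla f(\vy), \cdot\rangle$, which nothing controls.
\end{itemize}
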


\begin{proposition}[\citet{orabona2023normalized}]
\label{proposition:weight_averaged_iterate}
Suppose that \Cref{assumption:convex,assumption:holder_smoothness} hold. Then, for any $T \geq 1$, normalized gradient descent with $\tilde{\eta}_t = \frac{\| \vx_1 - \vx^\star\|}{\sqrt{T}}$ satisfies:
\begin{align}
    f (\tilde{\vx}_T) - f (\vx^\star) \leq \mathcal{O} \left( M_\nu \left( \frac{\| \vx_1 - \vx^\star\|^2}{T} \right)^\frac{1 + \nu}{2} \right),
\end{align}
where $\tilde{\vx}_T = \frac{1}{\sum_{t=1}^T \frac{1}{\| \nabla f (\vx_t) \|}} \sum_{t=1}^T \frac{\vx_t}{\| \nabla f (\vx_t) \|}$.
\end{proposition}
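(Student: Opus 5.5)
The statement to prove is \cref{proposition:weight_averaged_iterate}: a guarantee for the weighted average $\tilde{\vx}_T$ with weights $1/\|\nabla f(\vx_t)\|$, under a constant stepsize $\tilde\eta = D/\sqrt{T}$ with $D = \|\vx_1 - \vx^\star\|$. The plan is to run the usual distance-to-optimum recursion, control the weighted average of the function gaps by Jensen, and then turn the resulting bound on the weighted gap into a bound on $f(\tilde{\vx}_T)-f(\vx^\star)$ using H\"older smoothness.

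\textbf{Step 1: distance recursion.} Expanding the update gives
\begin{align*}
\|\vx_{t+1}-\vx^\star\|^2 = \|\vx_t-\vx^\star\|^2 - 2\tilde\eta\,\frac{\langle \nabla f(\vx_t),\vx_t-\vx^\star\rangle}{\|\nabla f(\vx_t)\|} + \tilde\eta^2 .
\end{align*}
Convexity gives $\langle \nabla f(\vx_t),\vx_t-\vx^\star\rangle \ge f(\vx_t)-f^\star$. Summing over $t=1,\dots,T$ and telescoping yields
\begin{align*}
\sum_{t=1}^T \frac{f(\vx_t)-f^\star}{\|\nabla f(\vx_t)\|} \le \frac{D^2+T\tilde\eta^2}{2\tilde\eta} = D\sqrt{T}.
\end{align*}

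\textbf{Step 2: Jensen on the weighted average.} Set $w_t = 1/\|\nabla f(\vx_t)\|$ and $W=\sum_t w_t$. By convexity of $f$,
\begin{align*}
f(\tilde{\vx}_T)-f^\star \le \frac{D\sqrt{T}}{W}.
\end{align*}
It therefore suffices to lower bound $W$. This is not available in general, since gradients may be large. So I would not bound $W$ directly; I would combine the previous display with a per-iterate bound instead.

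\textbf{Step 3: lower bound on the gradient-weighted gap (main obstacle).} I would use the H\"older inequality
\begin{align*}
f(\vx)-f^\star \ge c_\nu\, \frac{\|\nabla f(\vx)\|^{(1+\nu)/\nu}}{M_\nu^{1/\nu}}, \qquad c_\nu=\frac{\nu}{1+\nu}.
\end{align*}
This is the standard consequence of H\"older smoothness plus convexity for an unconstrained minimizer; the $\nu=0$ case is handled separately using $\|\nabla f\|\le M_0$. It gives $\|\nabla f(\vx_t)\| \le (M_\nu^{1/\nu}\delta_t/c_\nu)^{\nu/(1+\nu)}$, where $\delta_t = f(\vx_t)-f^\star$. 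Hence
\begin{align*}
\frac{\delta_t}{\|\nabla f(\vx_t)\|} \gtrsim \frac{\delta_t^{1/(1+\nu)}}{M_\nu^{1/(1+\nu)}}.
\end{align*}

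Step 3 only controls the terms $\delta_t$ individually, and alone it yields a best-iterate bound. For the weighted average I would split the indices according to whether $\delta_t$ exceeds the target level
\begin{align*}
\varepsilon = C M_\nu (D^2/T)^{(1+\nu)/2}.
\end{align*}
At indices with $\delta_t > \varepsilon$, Step 3 says each term $\delta_t w_t$ is at least $(\varepsilon/M_\nu)^{1/(1+\nu)} \asymp C' D/\sqrt{T}$. Step 1 then limits the number of such indices to at most $T/C'$, so at least half of the indices are ``good'' ($\delta_t\le\varepsilon$).

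At good indices, the bound $\|\nabla f(\vx_t)\| \lesssim (M_\nu^{1/\nu}\varepsilon)^{\nu/(1+\nu)}$ gives $w_t \gtrsim M_\nu^{-1/(1+\nu)}\varepsilon^{-\nu/(1+\nu)}$. Summing over at least $T/2$ good indices gives $W \gtrsim T\, M_\nu^{-1/(1+\nu)}\varepsilon^{-\nu/(1+\nu)}$. Substituting into Step 2,
\begin{align*}
f(\tilde{\vx}_T)-f^\star \lesssim \frac{D M_\nu^{1/(1+\nu)}\varepsilon^{\nu/(1+\nu)}}{\sqrt{T}} \asymp M_\nu \left(\frac{D^2}{T}\right)^{(1+\nu)/2},
\end{align*}
using $(\varepsilon/M_\nu)^{1/(1+\nu)} \asymp D/\sqrt{T}$. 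This is the claimed rate.

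The delicate point is that the constants in the H\"older lower bound degrade as $\nu\to0$. To keep the constant uniform in $\nu$, I would replace the $c_\nu$-bound with the inequality
\begin{align*}
f(\vx)-f^\star \ge \langle\nabla f(\vx),\vx-\vx^\star\rangle\text{-type bounds}
\end{align*}
via the generic estimate $\|\nabla f(\vx)\| \le \bigl(M_\nu^{1/\nu}\cdot\text{gap}\bigr)^{\nu/(1+\nu)}$ up to a constant bounded for $\nu\in[0,1]$. I expect checking this uniformity to be the main technical work.
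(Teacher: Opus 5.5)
The paper does not prove this proposition; it quotes it from \citet{orabona2023normalized}, so there is no in-paper argument to compare against. Judged on its own, your proof is correct.

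Steps 1--2 are right, and Step 3 is exactly \cref{lemma:bound_of_grad_norm}. The threshold argument closes with absolute constants. Take $D=\|\vx_1-\vx^\star\|$ and $\varepsilon = 16 M_\nu (D^2/T)^{(1+\nu)/2}$.
\begin{itemize}
\item Every index with $\delta_t>\varepsilon$ satisfies $\delta_t/\|\nabla f(\vx_t)\| \ge \tfrac12(\varepsilon/M_\nu)^{1/(1+\nu)} \ge 2D/\sqrt{T}$. Step 1 therefore allows at most $T/2$ such indices.
\item Each remaining index has $w_t \ge \tfrac12 M_\nu^{-1/(1+\nu)}\varepsilon^{-\nu/(1+\nu)}$.
\item Step 2 then yields $f(\tilde{\vx}_T)-f^\star \le 4\cdot 16^{\nu/(1+\nu)} M_\nu (D^2/T)^{(1+\nu)/2} \le 16 M_\nu (D^2/T)^{(1+\nu)/2}$.
\end{itemize}

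You should drop your final paragraph.
\begin{itemize}
\item The displayed bound $f(\vx)-f^\star \ge \langle \nabla f(\vx),\vx-\vx^\star\rangle$ goes the wrong way. Convexity gives $\le$; take $f(x)=x^2$.
\item The uniformity in $\nu$ is not real work. What enters is not $c_\nu$ but $c_\nu^{-\nu/(1+\nu)} = p^{-p}$ with $p=\nu/(1+\nu)\in[0,\tfrac12]$. This is at most $e^{1/e}<2$ and tends to $1$ as $\nu\to 0$.
\item This is the bound $(\frac{1+\nu}{\nu})^{\frac{\nu}{1+\nu}}\le 2$ used throughout the paper. Written as in \cref{lemma:bound_of_grad_norm}, it covers $\nu=0$ without a separate case.
\end{itemize}

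For comparison, there is a shorter route built from ingredients the paper uses elsewhere. Plug your Step 3 into Step 1 (equivalently, apply \cref{lemma:difference_of_function_value} with $\vy=\vx^\star$). With $\Phi(\vx)=(f(\vx)-f^\star)^{1/(1+\nu)}$, this gives $\sum_t \Phi(\vx_t)\le 2M_\nu^{1/(1+\nu)}D\sqrt{T}$. Then the harmonic--arithmetic mean inequality (as in \cref{eq:upper_bound_of_sum_of_v}) and concavity of $x\mapsto x^\nu$ give
\[
W \ge \frac{T^2}{\sum_t \|\nabla f(\vx_t)\|} \ge \frac{T^2}{2M_\nu^{1/(1+\nu)}\, T^{1-\nu} \bigl(\sum_t \Phi(\vx_t)\bigr)^{\nu}} \ge \frac{T^{1+\nu/2}}{2^{1+\nu} M_\nu D^{\nu}},
\]
hence $f(\tilde{\vx}_T)-f^\star \le 2^{1+\nu}M_\nu(D^2/T)^{(1+\nu)/2}$.

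That route needs no threshold and gives a better constant. Your Markov-type counting avoids the mean inequalities and proves slightly more: at least half of the iterates already satisfy $f(\vx_t)-f^\star\le 16M_\nu(D^2/T)^{(1+\nu)/2}$. This contains \cref{proposition:best_iterate} as a by-product.
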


{\Cref{proposition:best_iterate} bounds the best iterate, i.e., the iterate with the smallest function value, which can be identified only by evaluating $f$ at every iterate.
\Cref{proposition:weight_averaged_iterate} bounds a weighted average of the iterates, with weights proportional to the effective stepsizes $\eta_t$.
In practice, however, normalized gradient descent typically returns the last iterate $\vx_T$, which neither result covers.
To the best of our knowledge, the only existing last-iterate guarantee for normalized gradient descent is that of \citet{zamani2025exact}, which is restricted to $\nu = 0$.
In the next section, we establish last-iterate guarantees for the whole H\"older family $\nu \in [0, 1]$.

\section{Normalized Gradient Descent with a Constant Stepsize}
\label{sec:constant_stepsize}

In this section, we consider normalized gradient descent with a constant stepsize, i.e., $\tilde{\eta}_t = \tilde{\eta}$ for all $t$.
All proofs are deferred to \cref{sec:proof_of_last}.

\begin{theorem}
\label{theorem:convergenec_rate_of_last}
Suppose that Assumptions~\ref{assumption:convex} and \ref{assumption:holder_smoothness} hold.
Then, for any $T \geq 1$, normalized gradient descent with $\tilde{\eta}_t = \frac{\| \vx_1 - \vx^\star\|}{\sqrt{T}}$ satisfies
\begin{align*}
    f (\vx_T) - f (\vx^\star) \leq \mathcal{O} \left( M_\nu \left( \frac{\| \vx_1 - \vx^\star\|^2}{T} \right)^\frac{1 + \nu}{2} (1 + \log (T))^{1 + \nu} \right).
\end{align*}
\end{theorem}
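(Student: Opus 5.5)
Our plan is to adapt the Zamani--Glineur/Shamir--Zhang style last-iterate argument to the normalized quantities. The key observation is that convexity yields bounds on $\langle \nabla f(\vx_t)/\|\nabla f(\vx_t)\|, \vx_t - \vz\rangle$ for any comparator $\vz$. Write $D=\|\vx_1-\vx^\star\|$, $\tilde\eta = D/\sqrt{T}$ and $\bm{g}_t=\nabla f(\vx_t)/\|\nabla f(\vx_t)\|$.

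\emph{Step 1 (translating linearized gaps into function gaps).} Define $h_t(\vz) \coloneqq \langle \bm{g}_t, \vx_t-\vz\rangle$, which is nonnegative whenever $f(\vz)\le f(\vx_t)$. The H\"older upper bound, minimized over directions, gives
\begin{align*}
f(\vx_t)-f(\vx^\star)\le \mathcal{O}\bigl(M_\nu\, h_t(\vx^\star)^{1+\nu}\bigr).
\end{align*}
To see this, consider the point $\vx^\star$ and the hyperplane $\{\vy:\langle\bm g_t,\vy-\vx_t\rangle=0\}$, whose distance from $\vx^\star$ is $h_t(\vx^\star)$. The point $\vy$ on the hyperplane closest to $\vx^\star$ satisfies $f(\vy)\ge f(\vx_t)$ by convexity. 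Hence
\begin{align*}
f(\vx_t)-f(\vx^\star)\le f(\vy)-f(\vx^\star)\le \tfrac{M_\nu}{1+\nu}\|\vy-\vx^\star\|^{1+\nu}.
\end{align*}
More generally, the same argument gives $f(\vx_t)-f(\vz)\le \tfrac{M_\nu}{1+\nu}\bigl(h_t(\vz)+\text{(gradient at } \vz\text{ term)}\bigr)$ for comparators $\vz$ with $\nabla f(\vz)\neq 0$. This is awkward, so the approach below only applies the bound at $\vx_T$ at the end.

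\emph{Step 2 (a last-iterate bound on $h_T$).} Since $h_T(\vx^\star)$ controls the last-iterate error, it suffices to show $h_T(\vx^\star)\le \mathcal{O}\bigl(D(1+\log T)/\sqrt{T}\bigr)$. Raising this to the power $1+\nu$ and multiplying by $M_\nu$ gives the theorem. To get the bound, we run the standard normalized subgradient analysis on the nonnegative ``potential'' quantities. The basic inequality for any $\vz$ is
\begin{align*}
\|\vx_{t+1}-\vz\|^2=\|\vx_t-\vz\|^2-2\tilde\eta\, h_t(\vz)+\tilde\eta^2.
\end{align*}
The obstacle is that $h_t(\vz)$ for different $t$ are not comparable the way function values are. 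We therefore work with function values, using Step 1 in reverse to convert them back to $h_T$. Concretely, we use the Zamani--Glineur/Orabona comparator sequence $\vz_k = (1-\alpha_k)\vx^\star+\alpha_k \vx_{T-k}$ and sum telescoping inequalities over windows $t\in\{T-k,\dots,T\}$. This uses $\langle\nabla f(\vx_t),\vx_t-\vz\rangle\ge f(\vx_t)-f(\vz)\ge0$ to lower bound $h_t(\vz)\ge (f(\vx_t)-f(\vz))/\|\nabla f(\vx_t)\|$.

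\emph{Step 3 (the main obstacle: replacing the bounded-gradient step).} Because gradients are not bounded for $\nu>0$, the division by $\|\nabla f(\vx_t)\|$ cannot be absorbed directly. Instead we use the level-set version of the argument. Convexity gives $h_t(\vz)\ge \mathrm{dist}\bigl(\vz,\{f\ge f(\vx_t)\}^{c}\bigr)$, and Step 1 (H\"older) shows that $f(\vx_t)-f(\vx^\star)\le c M_\nu r_t^{1+\nu}$, where $r_t$ is the distance from $\vx^\star$ to the sublevel boundary. Define $r_t\coloneqq h_t(\vx^\star)$ as the key quantity. Since $\|\vx_{t+1}-\vx_t\|=\tilde\eta$, the quantities $r_t$ change by at most $\tilde\eta$ plus rotation effects. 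The Shamir--Zhang suffix averaging argument then shows that $r_T\le \min_{\text{suffix}}\text{avg}\, r_t + \tilde\eta\sum_{k}1/k$, applied with comparators on the segment $[\vx^\star,\vx_{T-k}]$. Combining this with the averaged bound $\frac1T\sum_t r_t\le (D^2+T\tilde\eta^2)/(2T\tilde\eta)=D/\sqrt T$ yields $r_T\le \mathcal{O}(D(1+\log T)/\sqrt T)$. Making this suffix step rigorous is, in my view, the crux. The difficulty is that the sublevel sets are nested only with respect to $f$-values, not $r_t$-values. We expect to handle this by working with $\phi_t=\bigl((f(\vx_t)-f^\star)/M_\nu\bigr)^{1/(1+\nu)}$, which is a monotone transform of the function value. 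The needed inequalities to check are $\phi_t\lesssim r_t$ (from Step 1) and $r_t(\vz)\ge$ a distance to a level set whose size is governed by $\phi$ on the comparator segment. The latter can be obtained by applying convexity of $f$ along the segment $[\vx^\star,\vx_{T-k}]$.
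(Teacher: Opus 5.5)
\paragraph{The target of Steps 2--3 is false.} Your Step~1 is correct: $f(\vx_t)-f^\star\le \frac{M_\nu}{1+\nu}h_t(\vx^\star)^{1+\nu}$. The problem is the claim you then try to prove, $h_T(\vx^\star)=O\bigl(D(1+\log T)/\sqrt T\bigr)$, which does not hold. The quantity $h_t(\vx^\star)$ is the distance from $\vx^\star$ to the supporting hyperplane of the current sublevel set. It therefore depends on the direction of $\nabla f(\vx_t)$ and can jump by $\Theta(D)$ in one step; the ``rotation effects'' you set aside are not bounded by $\tilde\eta$.

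Here is a counterexample. Write points as $(u,w)$ and take
\[
f(u,w)=\max\{\alpha u+w,\,-3w,\,\gamma|u|\},\qquad \vx^\star=0,\qquad \alpha=T^{-1/2},
\]
with $\gamma$ slightly above $3\alpha/4$. Then $\gamma|u|$ is active only in a thin wedge around the kink line $w=-\alpha u/4$. Start near $(D,0)$.
\begin{itemize}
\item The iterates alternate between the first two pieces.
\item Each pair of steps shifts the offset from the kink line by about $\tilde\eta\alpha^2/4=\tilde\eta/(4T)$.
\item Each pair decreases $u$ by only about $\tilde\eta\alpha=D/T$.
\end{itemize}
Tune the initial offset so the lower point of the cycle reaches the wedge exactly at step $T$. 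At that moment $u\approx D/2$ and $\nabla f(\vx_T)/\|\nabla f(\vx_T)\|=(1,0)$, so $h_T(\vx^\star)\approx D/2$. Meanwhile $\frac1T\sum_t h_t(\vx^\star)\le\tilde\eta$, as always.

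This $f$ is Lipschitz, so it is the $\nu=0$ case of the theorem. A Moreau envelope with a small parameter leaves the gradients at all iterates unchanged and gives a smooth example. The theorem is not contradicted, because here $f(\vx_T)-f^\star=\gamma\,h_T(\vx^\star)$ with $\gamma=O(T^{-1/2})$, so Step~1 is extremely lossy. Hence no suffix-averaging argument on $r_t=h_t(\vx^\star)$ can give the claimed bound. Note also that the distance from $\vx^\star$ to $\{f>f(\vx_t)\}$, which you also call $r_t$ at one point, is a different and smaller quantity. It too controls the gap, but your sketch gives no last-iterate bound for it.

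\paragraph{The fallback via $\phi_t$ is not carried out, and its key inequality fails where needed.} A lower bound of the form $h_t(\vz)\ge\frac12(\phi_t-\phi(\vz))$ is exactly \cref{lemma:difference_of_function_value}. It requires $f(\vx_t)\ge f(\vz)$. When $f(\vx_t)<f(\vz)$, convexity gives only $h_t(\vz)\ge-(f(\vz)-f(\vx_t))/\|\nabla f(\vx_t)\|$, which degenerates as the gradient vanishes. So the Shamir--Zhang telescoping over a window with comparators on $[\vx^\star,\vx_{T-k}]$ has no usable inequality to run on.

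\paragraph{What the paper does instead.} The missing idea is to keep the gradient norms as weights rather than normalizing them away.
\begin{itemize}
\item The paper applies \cref{lemma:zamani} with $\eta_t=\tilde\eta/\|\nabla f(\vx_t)\|$ and $v_t=1/\sum_{s>t}\eta_s$. This gives $2(f(\vx_T)-f^\star)\le v_0D^2+\tilde\eta^2\sum_t v_t$ directly in function values.
\item It bounds $v_0$ and $\sum_t v_t$ using the harmonic--arithmetic mean inequality and $\|\nabla f(\vx)\|\le 2M_\nu^{1/(1+\nu)}(f(\vx)-f^\star)^{\nu/(1+\nu)}$.
\item These bounds involve the earlier gaps, so it proves the logarithmic bound for every iterate by strong induction. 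The harmonic sum over iterates near $T$ is what produces the $\log T$.
\item It closes the argument with \cref{lemma:self_improving_bound}.
\end{itemize}
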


\paragraph{Discussion.} 
\Cref{theorem:convergenec_rate_of_last} provides a last-iterate guarantee for normalized gradient descent under H\"older smoothness.
As discussed in \cref{sec:related_work}, \citet{zamani2025exact} analyzed the last iterate only under the bounded-gradient assumption, i.e., for $\nu = 0$. \Cref{theorem:convergenec_rate_of_last} recovers their result up to absolute constants as a special case and extends it to the whole H\"older family $\nu \in [0,1]$, which, in particular, includes smooth objectives with unbounded gradients. See the detailed comparison in \cref{table:summary}.

Compared with the best-iterate and weighted-average guarantees in \cref{proposition:best_iterate,proposition:weight_averaged_iterate}, the bound in \cref{theorem:convergenec_rate_of_last} is worse by a factor of $(1 + \log(T))^{1 + \nu}$.
For $\nu = 0$, this overhead is not an artifact of the analysis: \citet{zamani2025exact} established a lower bound for normalized gradient descent with the constant stepsize $\|\vx_1 - \vx^\star\|/\sqrt{T}$ that matches \cref{theorem:convergenec_rate_of_last} in this case, so the logarithmic factor cannot be removed in general for this stepsize.
For $\nu > 0$, no such lower bound is currently known. In \cref{sec:numerical_evaluation}, we therefore study the smooth case $\nu = 1$ numerically and find that the last iterate of normalized gradient descent with a constant stepsize converges slower than $\mathcal{O}(1/T)$ in the worst case, over the range of horizons we can solve.

This behavior is particularly notable in the smooth case.
For convex and smooth objectives, gradient descent with a constant stepsize attains the rate $\mathcal{O}(1/T)$ for the last iterate \citep{nesterov2018lectures}, which is the same order as the best-iterate rate of normalized gradient descent in \cref{proposition:best_iterate}.
In contrast, the numerical results in \cref{sec:numerical_evaluation} indicate that normalized gradient descent does not attain this rate for the last iterate.
Thus, normalization can degrade the last-iterate behavior relative to gradient descent, even though it does not affect the best-iterate rate.

\subsection{Proof Sketch}
\label{sec:proof_sketch_of_constant_stepsize}

We now outline the proof of \cref{theorem:convergenec_rate_of_last}, explaining where the logarithmic factor comes from and why the case $\nu > 0$ requires an argument different from that of \citet{zamani2025exact}.

\paragraph{Key lemma.} Both of our main results rely on the following lemma, which is a variant of Lemma~2.1 of \citet{zamani2025exact} and of an argument in the proof of Lemma~4.1 of \citet{liu2024revisiting}.
Its proof compares each iterate $\vx_t$ with a reference point $\vz_t$ that is a convex combination of $\vx^\star$ and the previous iterates, with weights determined by the sequence $\{ v_t \}_t$.

\begin{restatable}{lemma}{zamaniLemma}
\label{lemma:zamani}
Suppose that \cref{assumption:convex} holds.
Then, for any sequence $\{ v_t \}_{t=0}^T$ such that $0 < v_0 \leq v_1 \leq \dots \leq v_T$, normalized gradient descent satisfies
\begin{align*}
    2 \sum_{t=1}^T \left[ \eta_t v_t - \left( \sum_{s=t}^T \eta_s \right) (v_t - v_{t-1}) \right] \left( f (\vx_t) - f (\vx^\star) \right)
    &\leq v_{0} \| \vx_1 - \vx^\star \|^2 + \sum_{t=1}^T v_t \tilde{\eta}_t^2.
\end{align*}
\end{restatable}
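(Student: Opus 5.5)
The plan is a potential-function argument. Compare each iterate $\vx_t$ with a moving reference point $\vz_t$, a convex combination of $\vx^\star$ and the iterates seen so far, and telescope a weighted distance $v_t\|\vx_{t+1}-\vz_t\|^2$. Write $\vx_{t+1} = \vx_t - \eta_t \nabla f(\vx_t)$ with effective stepsize $\eta_t = \tilde\eta_t/\|\nabla f(\vx_t)\|$, so that $\eta_t^2\|\nabla f(\vx_t)\|^2 = \tilde\eta_t^2$. Set $v_{-}$-free notation $f^\star = f(\vx^\star)$ and define, for $t \geq 1$,
\begin{align*}
\vz_t \coloneqq \frac{v_0}{v_t}\vx^\star + \sum_{s=1}^{t}\frac{v_s - v_{s-1}}{v_t}\vx_s ,
\end{align*}
with $\vz_0 = \vx^\star$. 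Monotonicity and positivity of $\{v_t\}$ make the weights nonnegative, and they sum to $1$. The construction gives the recursion $v_{t+1}\vz_{t+1} = v_t \vz_t + (v_{t+1}-v_t)\vx_{t+1}$.

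\textbf{Step 1 (one-step inequality).} For any fixed $\vz$, expanding the update gives
\begin{align*}
\|\vx_{t+1}-\vz\|^2 = \|\vx_t-\vz\|^2 - 2\eta_t\langle\nabla f(\vx_t),\vx_t-\vz\rangle + \tilde\eta_t^2 .
\end{align*}
By \cref{assumption:convex}, $\langle \nabla f(\vx_t), \vx_t - \vz\rangle \geq f(\vx_t)-f(\vz)$. Taking $\vz=\vz_t$ and multiplying by $v_t$ yields
\begin{align*}
v_t\|\vx_{t+1}-\vz_t\|^2 \leq v_t\|\vx_t-\vz_t\|^2 - 2\eta_t v_t\bigl(f(\vx_t)-f(\vz_t)\bigr) + v_t\tilde\eta_t^2 .
\end{align*}

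\textbf{Step 2 (switching reference points).} By the recursion, $\vz_{t+1}$ is a convex combination of $\vz_t$ and $\vx_{t+1}$ with weights $v_t/v_{t+1}$ and $(v_{t+1}-v_t)/v_{t+1}$. Convexity of $\|\vx_{t+1}-\cdot\|^2$ then gives
\begin{align*}
v_{t+1}\|\vx_{t+1}-\vz_{t+1}\|^2 \leq v_t\|\vx_{t+1}-\vz_t\|^2 + (v_{t+1}-v_t)\cdot 0 .
\end{align*}
Similarly, $v_1\|\vx_1-\vz_1\|^2\leq v_0\|\vx_1-\vx^\star\|^2$. Chaining this with Step 1 for $t=1,\dots,T$ and dropping the final nonnegative term $v_T\|\vx_{T+1}-\vz_T\|^2$ gives
\begin{align*}
2\sum_{t=1}^T \eta_t v_t\bigl(f(\vx_t)-f(\vz_t)\bigr) \leq v_0\|\vx_1-\vx^\star\|^2 + \sum_{t=1}^T v_t\tilde\eta_t^2 .
\end{align*}

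\textbf{Step 3 (removing $\vz_t$).} Jensen's inequality for $f$ gives
\begin{align*}
v_t f(\vz_t) \leq v_0 f^\star + \sum_{s\le t}(v_s-v_{s-1})f(\vx_s),
\end{align*}
that is, $v_t\bigl(f(\vz_t)-f^\star\bigr)\leq \sum_{s\le t}(v_s-v_{s-1})\bigl(f(\vx_s)-f^\star\bigr)$. Since $\eta_t>0$, this lower-bounds the left-hand side of Step 2 by
\begin{align*}
2\sum_{t=1}^T \eta_t\Bigl[v_t\bigl(f(\vx_t)-f^\star\bigr) - \sum_{s\le t}(v_s-v_{s-1})\bigl(f(\vx_s)-f^\star\bigr)\Bigr].
\end{align*}
Exchanging the order of the double sum, the coefficient of $f(\vx_s)-f^\star$ in the subtracted term is $(v_s-v_{s-1})\sum_{t=s}^T\eta_t$. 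This is exactly the claimed bracket, which proves the lemma.

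The main obstacle is Step 2: choosing $\vz_t$ so that the reference-point shift costs nothing. The crucial point is that the new weight is placed on $\vx_{t+1}$ itself, so its distance term vanishes. Everything else is a routine telescoping and sum exchange.
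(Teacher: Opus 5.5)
Your proposal is correct and follows essentially the same route as the paper: the same reference points $\vz_t$, the same one-step expansion combined with the (sub)gradient inequality, Jensen's inequality to bound $f(\vz_t)$, and the same exchange of summation. The one cosmetic difference is the reference-point shift: the paper uses the exact identity $\vx_t - \vz_t = \frac{v_{t-1}}{v_t}(\vx_t - \vz_{t-1})$ together with $(v_{t-1}/v_t)^2 \leq v_{t-1}/v_t$, while you invoke convexity of $\|\vx_{t+1} - \cdot\|^2$, and both yield the same inequality.
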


\paragraph{Reduction to two terms.}
To simplify the notation, we write $\eta_t \coloneqq \frac{\tilde{\eta}_t}{\| \nabla f (\vx_t) \|}$.
To isolate the last iterate, we choose
\begin{align}
\label{eq:definition_of_v_for_last_iterate}
    v_T \coloneqq \frac{1}{\eta_T},
    \quad v_t \coloneqq \frac{1}{\sum_{s=t+1}^T \eta_s} \quad \text{for all $t \in \{0, 1, \dots, T-1\}$}.
\end{align}
This sequence is nondecreasing, and with this choice the coefficient of $f(\vx_t) - f(\vx^\star)$ on the left-hand side of \cref{lemma:zamani} vanishes for every $t < T$ and equals $2$ for $t = T$.
Hence, for $\tilde{\eta}_t \coloneqq \tilde{\eta}$, we obtain the following by \cref{lemma:zamani}:
\begin{align}
\label{eq:zamani_last}
    2 \left( f (\vx_T) - f (\vx^\star) \right)
    &\leq v_{0} \| \vx_1 - \vx^\star \|^2 + \tilde{\eta}^2 \sum_{t=1}^T v_t.
\end{align}
Hence, it remains to bound $v_0$ and $\tilde{\eta}^2 \sum_{t=1}^T v_t$ from above, and both quantities are controlled by the gradient norms at the iterates.
For $v_0$, dropping the term $s = T$ and applying \cref{lemma:bound_of_grad_norm}, a consequence of \cref{assumption:holder_smoothness} that bounds $\| \nabla f (\vx) \|$ by $2 M_\nu^{\frac{1}{1+\nu}} (f(\vx) - f(\vx^\star))^{\frac{\nu}{1+\nu}}$, we get
\begin{align}
\label{eq:upper_bound_of_v0}
    v_0 &= \frac{1}{\tilde{\eta} \sum_{s=1}^T \frac{1}{\| \nabla f (\vx_s) \|}}
    \leq \frac{1}{\tilde{\eta} \sum_{s=1}^{T-1} \frac{1}{\| \nabla f (\vx_s) \|}}
    \leq \frac{2 M_\nu^\frac{1}{1 + \nu}}{\tilde{\eta}} \left( \sum_{s=1}^{T-1}  \left( f (\vx_s) - f (\vx^\star) \right)^{- \frac{\nu}{1 + \nu}} \right)^{-1}.
\end{align}
Similarly, using the inequality between the harmonic and the arithmetic mean, \cref{lemma:bound_of_grad_norm}, the definition of $v_T$, and a change of the order of summation, we get
\begin{align}
\label{eq:upper_bound_of_sum_of_v}
    \tilde{\eta}^2 \sum_{t=1}^T v_t
    &= \tilde{\eta}^2  v_T + \tilde{\eta} \sum_{t=1}^{T-1} \frac{1}{\sum_{s=t+1}^T \frac{1}{\| \nabla f (\vx_s) \|}} \nonumber \\
    &\leq \tilde{\eta}^2  v_T + \tilde{\eta} \sum_{t=1}^{T-1} \frac{1}{(T - t)^2} \sum_{s=t+1}^T \| \nabla f (\vx_s) \| \\
    &\leq \tilde{\eta}^2 v_T + 2 M_\nu^\frac{1}{1 + \nu} \tilde{\eta} \sum_{t=1}^{T-1} \frac{1}{(T - t)^2} \sum_{s=t+1}^T  \left( f (\vx_s) - f (\vx^\star) \right)^\frac{\nu}{1 + \nu} \nonumber \\
    &\leq 6 M_\nu^\frac{1}{1 + \nu} \tilde{\eta} \left( f (\vx_T) - f (\vx^\star) \right)^\frac{\nu}{1 + \nu}
    + 2 M_\nu^\frac{1}{1 + \nu} \tilde{\eta}  \sum_{s=2}^{T-1}  \left( f (\vx_s) - f (\vx^\star) \right)^\frac{\nu}{1 + \nu} \sum_{t=1}^{s-1} \frac{1}{(T - t)^2} \nonumber.
\end{align}
The details are given in \cref{sec:proof_of_last}.
When $\nu = 0$, the factors $(f(\vx_s) - f(\vx^\star))^{\frac{\nu}{1 + \nu}}$ are equal to one, and the last-iterate rate follows directly from \cref{eq:zamani_last,eq:upper_bound_of_v0,eq:upper_bound_of_sum_of_v}, as in \citet{zamani2025exact}.
For $\nu \in (0, 1]$, however, the bounds on $v_0$ and $\tilde{\eta}^2 \sum_{t=1}^T v_t$ depend on the suboptimality gaps $f(\vx_s) - f(\vx^\star)$ of the earlier iterates, i.e., on the very quantities we want to bound.
This is what makes the case $\nu > 0$ harder: we resolve it by proving the following bound for \emph{every} iterate by induction on $t$.

\begin{restatable}{lemma}{LemmaaveragedRateWithConstant}
\label{lemma:average_rate_with_constant}
Suppose that Assumptions~\ref{assumption:convex} and \ref{assumption:holder_smoothness} hold, and let $\tilde{\eta}_t = \tilde{\eta} > 0$ for all $t$.
Then, for every $t \geq 1$ such that $\tilde{\eta} \leq \frac{\| \vx_1 - \vx^\star\|}{\sqrt{t}}$, normalized gradient descent satisfies
\begin{align}
\label{eq:last_iterate_for_every_t}
    \!\! f (\vx_t) - f (\vx^\star) \leq M_\nu \left( 1200 \left( \frac{\| \vx_1 - \vx^\star \|^{2}}{\tilde{\eta} t} \right)^{1 + \nu}
    \!\!\!\!\!\!\!\!\! + 200 \left( \frac{\| \vx_1 - \vx^\star\|^2}{t} \right)^\frac{1 + \nu}{2} \right) (1 + \log (t))^{1 + \nu}.
\end{align}
\end{restatable}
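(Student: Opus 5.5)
We prove \cref{lemma:average_rate_with_constant} by strong induction on $t$. Write $D \coloneqq \|\vx_1 - \vx^\star\|$, $\delta_s \coloneqq f(\vx_s) - f(\vx^\star)$, and
\[
B(t) \coloneqq M_\nu \left( 1200 \left( \frac{D^{2}}{\tilde{\eta} t} \right)^{1+\nu} + 200 \left( \frac{D^2}{t} \right)^{\frac{1+\nu}{2}} \right) (1+\log t)^{1+\nu}.
\]

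\textbf{Base case.} For $t = 1$, \cref{assumption:holder_smoothness} together with the descent-type upper bound gives $\delta_1 \leq M_\nu D^{1+\nu}/(1+\nu) \leq B(1)$.

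\textbf{Reduction for the inductive step.} Fix $t$ with $\tilde{\eta} \leq D/\sqrt{t}$. For every $s < t$ we also have $\tilde{\eta} \leq D/\sqrt{s}$, so the hypothesis $\delta_s \leq B(s)$ is available. Run the reduction of \cref{sec:proof_sketch_of_constant_stepsize} with horizon $t$ in place of $T$, i.e.\ \cref{lemma:zamani} with the choice \eqref{eq:definition_of_v_for_last_iterate}. This gives
\[
2\delta_t \leq v_0 D^2 + \tilde{\eta}^2 \sum_{r=1}^t v_r .
\]
The term $v_0$ is controlled by \eqref{eq:upper_bound_of_v0}. The sum is controlled by the last line of \eqref{eq:upper_bound_of_sum_of_v}, which contains the self-referential term $6M_\nu^{1/(1+\nu)}\tilde{\eta}\,\delta_t^{\nu/(1+\nu)}$. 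We handle that term by Young's inequality with exponents $(1+\nu)/\nu$ and $1+\nu$:
\[
6M_\nu^{\frac{1}{1+\nu}}\tilde{\eta}\,\delta_t^{\frac{\nu}{1+\nu}} \leq \delta_t + C M_\nu \tilde{\eta}^{1+\nu}.
\]
The first piece is absorbed into the left-hand side. The second is at most $C M_\nu (D^2/t)^{(1+\nu)/2}$ because $\tilde{\eta} \leq D/\sqrt{t}$, and it is covered by the $200$-term of $B(t)$.

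\textbf{Bounding $v_0$.} Since $x \mapsto x^{-\nu/(1+\nu)}$ is decreasing, the induction hypothesis gives $\delta_s^{-\nu/(1+\nu)} \geq B(s)^{-\nu/(1+\nu)}$. Bound $B(s) \leq B_{\max}(s)$, where $B_{\max}(s) \coloneqq 2\max\{\text{two terms}\}\cdot(1+\log t)^{1+\nu}$, and keep only $s \in [t/2, t-1]$, on which $B(s) \lesssim B(t)$ up to a factor $2^{1+\nu}$. This yields
\[
\sum_{s=1}^{t-1}\delta_s^{-\frac{\nu}{1+\nu}} \gtrsim t\,B(t)^{-\frac{\nu}{1+\nu}},
\qquad\text{hence}\qquad
v_0 D^2 \lesssim \frac{M_\nu^{\frac{1}{1+\nu}} D^2}{\tilde{\eta}\, t}\, B(t)^{\frac{\nu}{1+\nu}}.
\]
The range $t = 2, 3$, where $\lceil t/2\rceil \le s\le t-1$ is small but nonempty, is handled the same way. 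Applying Young again,
\[
\frac{M_\nu^{\frac{1}{1+\nu}}D^2}{\tilde{\eta} t}\,B^{\frac{\nu}{1+\nu}} \leq \epsilon B + C_\epsilon M_\nu \left(\frac{D^2}{\tilde{\eta} t}\right)^{1+\nu},
\]
and the last term is exactly the shape of the $1200$-term. This is where the $(D^2/(\tilde{\eta} t))^{1+\nu}$ term in the statement comes from.

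\textbf{Bounding the remaining sum (main obstacle).} The hardest step is the double sum
\[
2M_\nu^{\frac{1}{1+\nu}}\tilde{\eta}\sum_{s=2}^{t-1}\delta_s^{\frac{\nu}{1+\nu}} \sum_{r=1}^{s-1}\frac{1}{(t-r)^2}.
\]
The inner sum is at most $1/(t-s)$. Inserting the hypothesis and using $(a+b)^{\nu/(1+\nu)} \le a^{\nu/(1+\nu)} + b^{\nu/(1+\nu)}$, we must bound
\[
\tilde{\eta}\sum_{s} \frac{1}{t-s}\,M_\nu^{\frac{\nu}{1+\nu}}\left[\left(\frac{D^2}{\tilde{\eta} s}\right)^{\nu} + \left(\frac{D^2}{s}\right)^{\frac{\nu}{2}}\right](1+\log t)^{\nu}.
\]
Split the range at $s = t/2$. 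For $s \geq t/2$, the bracket is within a factor $2$ of its value at $s=t$, and $\sum_s 1/(t-s) \leq 1+\log t$, which produces one additional logarithm and restores the power $(1+\log t)^{1+\nu}$. For $s < t/2$, we have $1/(t-s) \leq 2/t$, and summing $s^{-\nu}$ or $s^{-\nu/2}$ gives $O(t^{1-\nu})$ or $O(t^{1-\nu/2})$; this is again comparable to the value at $s=t$ times $O(1)$. The prefactor $M_\nu^{1/(1+\nu)}\tilde{\eta}\, B(t)^{\nu/(1+\nu)}(1+\log t)$ is then split by Young into $\epsilon B(t)$ plus $C M_\nu \tilde{\eta}^{1+\nu}(1+\log t)^{1+\nu}$, and the latter is bounded by the $200$-term using $\tilde{\eta}\leq D/\sqrt t$.

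\textbf{Closing the induction.} Collect the pieces: at most $3\epsilon B(t)$ plus contributions of the two term shapes with explicit constants. Choosing $\epsilon$ small and verifying that these constants are at most $1200$ and $200$ closes the induction. The delicate point is keeping the logarithmic power at exactly $1+\nu$: the $\log$ from the harmonic sum combines with $(1+\log t)^{\nu}$ from the hypothesis, and Young's inequality must not increase that exponent.
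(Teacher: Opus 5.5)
Your argument is sound and shares the paper's skeleton: strong induction, \cref{lemma:zamani} with the choice \eqref{eq:definition_of_v_for_last_iterate}, the split of the double sum at $t/2$, and the harmonic sum over $s \ge t/2$ as the only source of the extra logarithm. What differs is how the induction closes.

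The paper substitutes the hypothesis with its explicit constants and uses the fact that they re-enter only through the power $\nu/(1+\nu)$, so $1200$ and $200$ shrink to at most $35$ and $15$. It converts the leftover factor $\tilde\eta$ via $\tilde\eta \le \|\vx_1-\vx^\star\|^2/(\tilde\eta T)$. It applies Young's inequality (\cref{lemma:self_improving_bound}) only once, to the self-referential $\delta_T^{\nu/(1+\nu)}$ term, which gives the constants $876$ and $60$ in one pass. For $v_0$ it also merges the two hypothesis terms into $1400(\|\vx_1-\vx^\star\|^2/(\tilde\eta s))^{1+\nu}$ and uses all $s\le T-1$ via $\sum_{s<T}s^\nu \ge T^{1+\nu}/4$.

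Your version writes everything through $B(t)^{\nu/(1+\nu)}$ and trades it for $\epsilon B(t)$ plus a residual whose constant does not depend on $1200$ and $200$. This makes the contraction mechanism more transparent. However, closing the induction is then a balancing problem, not ``take $\epsilon$ small''. At $\nu = 1$ Young reads $aB^{1/2} \le \epsilon B + a^2/(4\epsilon)$. With a double-sum prefactor of about $8M_\nu^{1/2}\tilde\eta(1+\log t)$, the residual in the $200$-term is about $16/\epsilon$, which exceeds $200$ once $\epsilon < 0.08$. A moderate choice does work with the stated constants. With $\epsilon = 1/5$ in the $v_0$ and double-sum steps one gets roughly $\delta_t \le \tfrac25 B(t) + 180 M_\nu (D^2/(\tilde\eta t))^{2} + 89 M_\nu (D^2/t)(1+\log t)^{2}$, and $480 + 180 \le 1200$, $80 + 89 \le 200$; smaller $\nu$ is easier. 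Since the lemma asserts specific constants, you should carry this check out.

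One smaller inaccuracy: for $s < t/2$, after bounding the inner sum by $2/t$, the sum $\sum_{s<t/2} s^{-\nu}$ is not $O(t^{1-\nu})$ uniformly in $\nu$; it is of order $\log t$ at $\nu = 1$. You can keep the sharper bound $\sum_{r<s}(t-r)^{-2} \le 4s/t^2$, as the paper does for $\mathcal{T}_1$. Alternatively, note that the resulting extra factor $1+\log t$ is harmless because your prefactor already carries it.
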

Since $\tilde{\eta} = \|\vx_1 - \vx^\star\|/\sqrt{T}$ satisfies $\tilde{\eta} \leq \|\vx_1 - \vx^\star\|/\sqrt{t}$ for every $t \leq T$, \cref{theorem:convergenec_rate_of_last} follows from \cref{lemma:average_rate_with_constant} with $t = T$, with the absolute constant $1400$.

\paragraph{Proof Sketch of \cref{lemma:average_rate_with_constant}.}
We sketch the induction step, which also shows where the logarithmic factor comes from; the base case $t = 1$ follows from \cref{lemma:holder_smooth}.

Assume that \cref{eq:last_iterate_for_every_t} holds for all $1 \leq t \leq T-1$; we show that it holds for $t = T$ whenever $\tilde{\eta} \leq \|\vx_1 - \vx^\star\|/\sqrt{T}$.
By \cref{eq:zamani_last}, it suffices to bound $v_0$ and $\tilde{\eta}^2 \sum_{t=1}^T v_t$.
Combining \cref{eq:upper_bound_of_v0} with the induction hypothesis (which applies to every $s \leq T-1$ because $\tilde{\eta} \leq \|\vx_1 - \vx^\star\|/\sqrt{s}$) and elementary manipulations, we obtain
\begin{align*}
    v_0 \leq 320 (1 + \log (T))^\nu M_\nu \left( \frac{\| \vx_1 - \vx^\star\|^{2\nu}}{\tilde{\eta}^{1 + \nu} T^{1 + \nu}} \right).
\end{align*}
The factor $(1 + \log(T))^\nu$ is inherited from the induction hypothesis through $(f(\vx_s) - f(\vx^\star))^{\frac{\nu}{1+\nu}}$; the bound on $v_0$ itself does not introduce any additional logarithmic factor.

The additional logarithmic factor arises in the bound on $\tilde{\eta}^2 \sum_{t=1}^T v_t$, so we discuss it in more detail.
By \cref{eq:upper_bound_of_sum_of_v}, it suffices to bound
\begin{align*}
    &\sum_{s=2}^{T-1}  \left( f (\vx_s) - f (\vx^\star) \right)^\frac{\nu}{1 + \nu} \sum_{t=1}^{s-1} \frac{1}{(T - t)^2} \\
    &= \underbrace{\sum_{s=2}^{\floor{\frac{T-1}{2}}}  \left( f (\vx_s) - f (\vx^\star) \right)^\frac{\nu}{1 + \nu} \sum_{t=1}^{s-1} \frac{1}{(T - t)^2}}_{\mathcal{T}_1}
    + \underbrace{\sum_{s=\floor{\frac{T-1}{2}} + 1}^{T-1}  \left( f (\vx_s) - f (\vx^\star) \right)^\frac{\nu}{1 + \nu} \sum_{t=1}^{s-1} \frac{1}{(T - t)^2}}_{\mathcal{T}_2}.
\end{align*}

In $\mathcal{T}_1$, the index $s$ is at most $\frac{T-1}{2}$, so $T - t \geq \frac{T}{2}$ for all $t \leq s - 1$ and the inner sum is at most $\frac{4s}{T^2}$; combining this with \cref{eq:last_iterate_for_every_t} for $s \leq T-1$, we obtain, without any additional logarithmic factor,
\begin{align*}
   \mathcal{T}_1
   &\leq 200 M_\nu^\frac{\nu}{1 + \nu} (1 + \log (T))^\nu \left( \frac{\| \vx_1 - \vx^\star \|^{2 \nu}}{\tilde{\eta}^{\nu} T^\nu } \right).
\end{align*}

In $\mathcal{T}_2$, the index $s$ can be close to $T$, and then the inner sum $\sum_{t=1}^{s-1} \tfrac{1}{(T-t)^2} \leq \tfrac{1}{T - s}$ is no longer small; summing $\tfrac{1}{T-s}$ over $s$ produces a harmonic sum, i.e., an additional logarithmic factor.
Specifically, using $s \geq \floor{\frac{T-1}{2}} + 1$, $T \geq 2$, and \cref{eq:last_iterate_for_every_t}, we have
\begin{align}
    \label{eq:t2}
    \mathcal{T}_2
    &\leq M_\nu^\frac{\nu}{1 + \nu} (1 + \log (T))^{\nu} \sum_{s=\floor{\frac{T-1}{2}}}^{T-1}  \left( 70 \left( \frac{\| \vx_1 - \vx^\star \|^{2}}{\tilde{\eta} T} \right)^{\nu}
    + 30 \left( \frac{\| \vx_1 - \vx^\star\|^2}{T} \right)^\frac{\nu}{2} \right) \sum_{t=1}^{s-1} \frac{1}{(T - t)^2} \nonumber \\
    &\leq  M_\nu^\frac{\nu}{1 + \nu} (1 + \log (T))^{\nu} \sum_{s=\floor{\frac{T-1}{2}}}^{T-1}  \left( 70 \left( \frac{\| \vx_1 - \vx^\star \|^{2}}{\tilde{\eta} T} \right)^{\nu}
    + 30 \left( \frac{\| \vx_1 - \vx^\star\|^2}{T} \right)^\frac{\nu}{2} \right) \frac{1}{T - s} \\
    &\leq M_\nu^\frac{\nu}{1 + \nu} (1 + \log (T))^{1 + \nu} \left( 70 \left( \frac{\| \vx_1 - \vx^\star \|^{2}}{\tilde{\eta} T} \right)^{\nu}
    + 30 \left( \frac{\| \vx_1 - \vx^\star\|^2}{T} \right)^\frac{\nu}{2} \right). \nonumber
\end{align}
The last step uses $\sum_{s=\floor{\frac{T-1}{2}}}^{T-1} \frac{1}{T-s} \leq 1 + \log(T)$.
This harmonic sum is the source of the additional logarithmic factor, and it is precisely the reason why the induction hypothesis \cref{eq:last_iterate_for_every_t} must itself contain a logarithmic factor.
Combining the bounds on $v_0$, $\mathcal{T}_1$, and $\mathcal{T}_2$ with \cref{eq:zamani_last,eq:upper_bound_of_sum_of_v} and using $\tilde{\eta} \leq \|\vx_1 - \vx^\star\|/\sqrt{T}$, we obtain
\begin{align*}
    f (\vx_T) - f (\vx^\star)
    &\leq \frac{4 M_\nu^\frac{1}{1 + \nu} \| \vx_1 - \vx^\star\|^2}{T \tilde{\eta}} \left( f (\vx_T) - f (\vx^\star) \right)^\frac{\nu}{1 + \nu} \\
    &+  M_\nu (1 + \log (T))^{1 + \nu} \left( 430 \left( \frac{\| \vx_1 - \vx^\star \|^{2}}{\tilde{\eta} T} \right)^{1 + \nu}
    + 30 \left( \frac{\| \vx_1 - \vx^\star\|^2}{T} \right)^\frac{1 + \nu}{2} \right),
\end{align*}
and \cref{lemma:self_improving_bound}, a consequence of Young's inequality, yields \cref{eq:last_iterate_for_every_t} for $t = T$.

\subsection{Numerical Evaluation}
\label{sec:numerical_evaluation}

In this section, we use the performance estimation problem (PEP) \citep{drori2014performance,taylor2017smooth} to examine numerically whether the logarithmic overhead in \cref{theorem:convergenec_rate_of_last} is an artifact of our analysis.
For $\nu = 0$, it is not: the lower bound of \citet{zamani2025exact} matches \cref{theorem:convergenec_rate_of_last} for the stepsize $\|\vx_1 - \vx^\star\|/\sqrt{T}$.
For $\nu > 0$, no lower bound is known, so we focus on the smooth case $\nu = 1$ and ask whether the last iterate attains the $\mathcal{O}(1/T)$ rate of the best iterate.

PEP computes the exact worst-case performance of a first-order method over a class of functions by solving a semidefinite program.
Since we are interested in the dependence on $T$, we set $M_\nu = 1$ and $\| \vx_1 - \vx^\star \| = 1$.
The worst-case last-iterate suboptimality of normalized gradient descent after $T$ iterations is then the optimal value of
\begin{align}
\begin{alignedat}{2}
    \max_{f} f(\vx_T) - f(\vx^\star)
    \quad \text{s.t.} \quad &  \|\vx_1 - \vx^\star\| = 1, \\
    & \vx_{t+1} = \vx_t - \frac{\|\vx_1 - \vx^\star\|}{\sqrt{T}} \frac{\nabla f(\vx_t)}{\|\nabla f(\vx_t)\|}, \\
    &\text{$f$ satisfies \cref{assumption:convex,assumption:holder_smoothness} with $M_\nu=1$ and $\nu=1$}.
\end{alignedat}
\label{eq:pep1}
\end{align}

Because the normalization $\nabla f(\vx_t) / \|\nabla f(\vx_t)\|$ is nonlinear in $\nabla f(\vx_t)$, \cref{eq:pep1} cannot be cast as a semidefinite program directly.
We therefore fix the gradient norms $\{ g_t \}_t$ in advance and solve
\begin{align}
\begin{alignedat}{2}
    \max_{f} f(\vx_T) - f(\vx^\star)
    \quad \text{s.t.} \quad &  \|\vx_1 - \vx^\star\| = 1, \\
    & \vx_{t+1} = \vx_t - \frac{\|\vx_1 - \vx^\star\|}{\sqrt{T}} \frac{\nabla f(\vx_t)}{g_t}, \\
    & \| \nabla f (\vx_t) \| = g_t \; \text{for all $1 \leq t \leq T-1$}, \\
    &\text{$f$ satisfies \cref{assumption:convex,assumption:holder_smoothness} with $M_\nu=1$ and $\nu=1$},
\end{alignedat}
\label{eq:pep2}
\end{align}
which can be formulated as a semidefinite program: the update is now linear in $\nabla f(\vx_t)$, and the norm constraints are linear in the Gram matrix of the gradients; see \citet{drori2014performance,taylor2017smooth} for the details.
We solve it with PEPit \citep{goujaud2024pepit}.
Every feasible point of \cref{eq:pep2} is feasible for \cref{eq:pep1}; hence, for any choice of $\{ g_t \}_t$, the optimal value of \cref{eq:pep2} is a lower bound on the worst-case suboptimality of normalized gradient descent after $T$ iterations, and maximizing it over $\{ g_t \}_t$ would recover \cref{eq:pep1}.
Since this maximization is intractable, and since a single sequence suffices to exhibit slow convergence, we tried several choices of $\{ g_t \}_t$ and report the results for $g_t = c / \sqrt{T}$ with $c \in \{0.1, 0.06, 0.02\}$, for which the observed decay is slower than $1/T$.
Other choices of $\{ g_t \}_t$ may lead to even larger worst-case values.

\begin{figure}[!t]
    \centering
    \vskip -0.1 in
    \includegraphics[height=4.8cm]{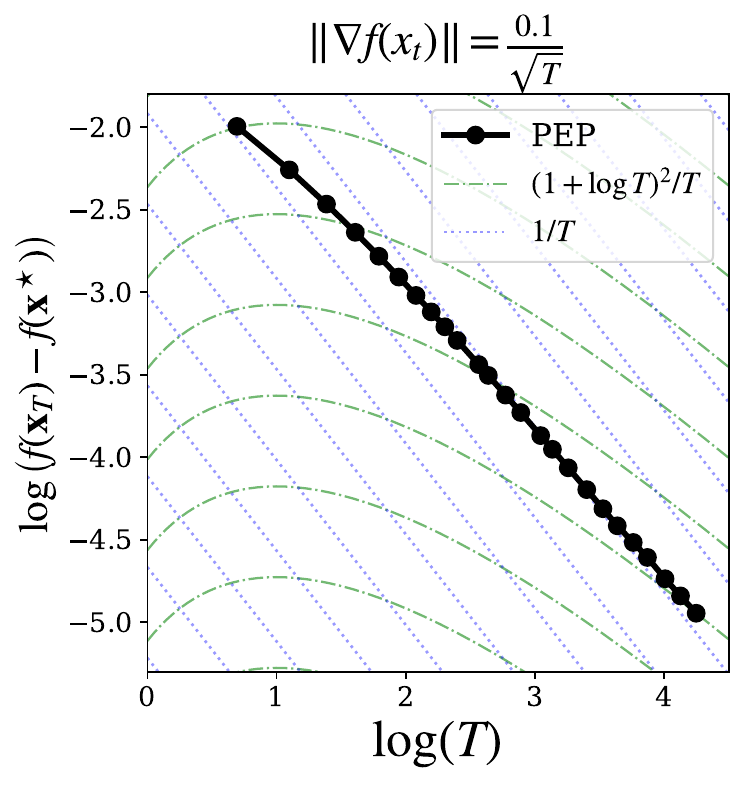}
    \includegraphics[height=4.8cm]{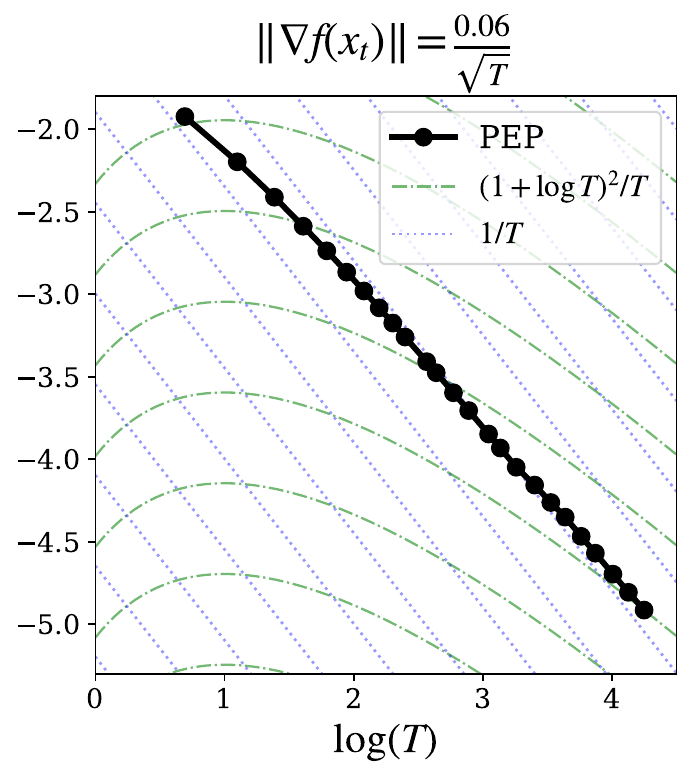}
    \includegraphics[height=4.8cm]{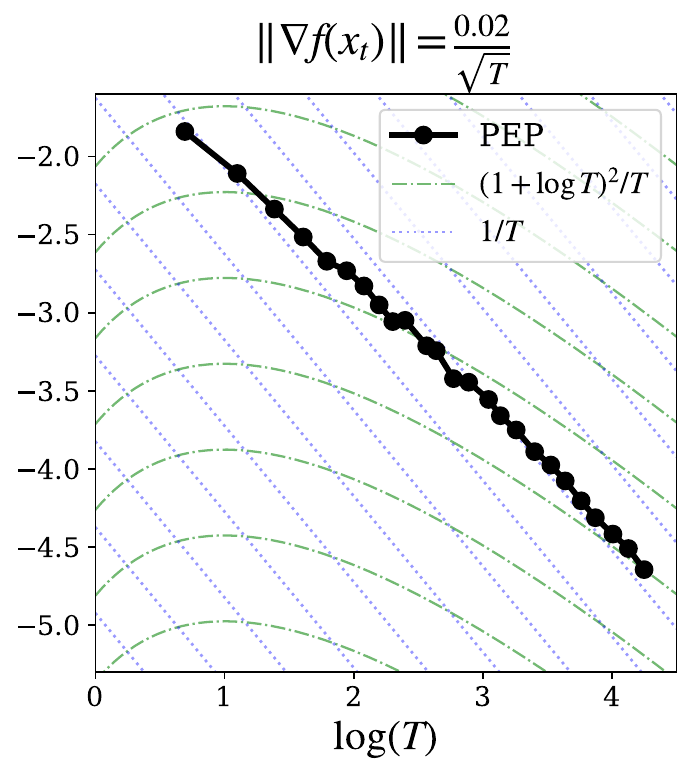}
    \vskip -0.1 in
    \caption{Optimal value of \cref{eq:pep2}, i.e., the worst-case $f(\vx_T) - f(\vx^\star)$ over convex $1$-smooth functions with $\|\vx_1 - \vx^\star\| = 1$ and prescribed gradient norms $g_t = c/\sqrt{T}$, as a function of $T$ (log--log scale) for $c = 0.1$ (left), $c = 0.06$ (middle), and $c = 0.02$ (right). The dotted and dash-dotted lines show the reference decays $1/T$ and $(1 + \log T)^2/T$ up to constant factors. In all three cases, the observed decay is slower than $1/T$.}
    \label{fig:pep}
    \vskip - 0.2 in
\end{figure}

\Cref{fig:pep} shows the results.
For all three values of $c$, the worst-case suboptimality decays slower than $\mathcal{O} ( 1/T )$ over the range of horizons that we could solve, but faster than the $\mathcal{O}(\log^2(T)/T)$ bound of \cref{theorem:convergenec_rate_of_last}.
The latter does not imply that \cref{theorem:convergenec_rate_of_last} is loose, because $g_t \propto 1/\sqrt{T}$ need not be the worst choice of the gradient norms.
We stress that these experiments cover a limited range of $T$ and a hand-picked family $\{ g_t \}_t$: they indicate that the last iterate of normalized gradient descent with a constant stepsize does not attain the $\mathcal{O}(1/T)$ rate of the best iterate, but they neither identify the exact worst-case rate nor prove that the overhead is logarithmic.
Determining the tightness of \cref{theorem:convergenec_rate_of_last} for $\nu > 0$ is left for future work.

\section{Normalized Gradient Descent with a Linearly Decreasing Stepsize}
\label{sec:linearly_decreasing_stepsize}

\Cref{sec:constant_stepsize} shows that, with a constant stepsize, the last iterate of normalized gradient descent suffers a logarithmic overhead which, at least for $\nu = 0$, is unavoidable.
In this section, we show that this overhead disappears if the stepsize linearly decreases.
Specifically, we consider the linearly decreasing stepsize
\begin{align}
\label{eq:ngd_with_adaptive_stepsize}
    \vx_{t+1} = \vx_t - \underbrace{\eta \left( 1 - \frac{t}{T+1} \right)}_{\tilde{\eta}_t} \frac{\nabla f (\vx_t)}{\| \nabla f (\vx_t) \|},
\end{align}
where $\eta > 0$ is the \textit{baseline} stepsize, i.e., the maximum stepsize over the run.
This is the linear-decay schedule widely used for training deep neural networks \citep{bert2019devlin,liu2021swin,defazio2024optimal}; note that it depends only on the iteration counter $t$ and on the horizon $T$, and not on the iterates or gradients.

The following theorem gives the last-iterate rate of normalized gradient descent with the stepsize \cref{eq:ngd_with_adaptive_stepsize} and $\eta = \|\vx_1 - \vx^\star\|/\sqrt{T}$.
The proof is deferred to \cref{sec:proof_of_adaptive_stepsize}.

\begin{theorem}
\label{theorem:convergenec_rate_of_last_with_adaptive_stepsize}
Suppose that Assumptions~\ref{assumption:convex} and \ref{assumption:holder_smoothness} hold.
Then, for any $T \geq 1$, normalized gradient descent with $\tilde{\eta}_t = (1 - \frac{t}{T+1}) \frac{\| \vx_1 - \vx^\star\|}{\sqrt{T}}$ satisfies
\begin{align*}
    f (\vx_T) - f (\vx^\star)
    &\leq \mathcal{O} \left( M_\nu \left( \frac{\| \vx_1 - \vx^\star\|^2}{T} \right)^\frac{1 + \nu}{2} \right).
\end{align*}
\end{theorem}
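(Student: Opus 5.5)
We would again start from \cref{lemma:zamani}, now with the stepsizes $\tilde{\eta}_t = \eta(1 - \frac{t}{T+1})$ and the same last-iterate choice of weights $v_T = 1/\eta_T$ and $v_t = 1/\sum_{s=t+1}^T \eta_s$, where $\eta_t = \tilde{\eta}_t/\|\nabla f(\vx_t)\|$. With this choice the coefficients of $f(\vx_t)-f(\vx^\star)$ for $t<T$ vanish, so
\begin{align*}
2\left(f(\vx_T)-f(\vx^\star)\right) \leq v_0\|\vx_1-\vx^\star\|^2 + \sum_{t=1}^T v_t \tilde{\eta}_t^2 .
\end{align*}
The two right-hand terms are bounded exactly as in \cref{eq:upper_bound_of_v0,eq:upper_bound_of_sum_of_v}, with two changes: the harmonic--arithmetic mean step must now be weighted by the $\tilde{\eta}_s$, and we again use \cref{lemma:bound_of_grad_norm} to convert gradient norms into suboptimality gaps. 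For the second term, Cauchy--Schwarz gives
\begin{align*}
\frac{1}{\sum_{s>t}\tilde{\eta}_s/\|\nabla f(\vx_s)\|} \leq \frac{\sum_{s>t}\tilde{\eta}_s\|\nabla f(\vx_s)\|}{\left(\sum_{s>t}\tilde{\eta}_s\right)^2}.
\end{align*}
Since $\sum_{s=t+1}^T \tilde{\eta}_s \asymp \eta (T-t)^2/T$ and $\tilde{\eta}_t^2 \asymp \eta^2 (T+1-t)^2/T^2$, the product $\tilde{\eta}_t^2/(\sum_{s>t}\tilde{\eta}_s)^2$ behaves like $1/(T-t)^2$. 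The decisive gain is the extra factor $\tilde{\eta}_s \asymp \eta(T+1-s)/T$ attached to each $\|\nabla f(\vx_s)\|$. After exchanging the order of summation, the weight of $(f(\vx_s)-f(\vx^\star))^{\nu/(1+\nu)}$ becomes roughly
\begin{align*}
\frac{\eta(T-s)}{T}\sum_{t<s}\frac{1}{(T-t)^2} \lesssim \frac{\eta}{T}.
\end{align*}
This replaces the $1/(T-s)$ weight that produced the harmonic sum. The term $s=T$ is handled together with $v_T\tilde{\eta}_T^2 = \tilde{\eta}_T\|\nabla f(\vx_T)\|$, which is at most $\frac{\eta}{T}\|\nabla f(\vx_T)\|$.

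As in the constant case, these bounds involve the gaps of earlier iterates, so we prove by strong induction on $t$ the uniform bound
\begin{align*}
f(\vx_t)-f(\vx^\star) \leq C M_\nu\left(\frac{\|\vx_1-\vx^\star\|^2}{t}\right)^{\frac{1+\nu}{2}}.
\end{align*}
One subtlety is that a prefix of the decreasing schedule is not itself a linear-decay run of horizon $t$. We handle this in one of two ways. The first option is to note that for $t\le T$ the prefix stepsizes satisfy $\tilde{\eta}_s \le \eta \le \|\vx_1-\vx^\star\|/\sqrt{t}$. We then invoke a monotone-stepsize analogue of \cref{lemma:average_rate_with_constant} only for $s\le T/2$, where $\tilde{\eta}_s\ge \eta/2$ is essentially constant. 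This yields a bound with a $\log$ factor, but those early iterates enter only through weights of order $\eta s/T^2$, so the log is absorbed. The second option, which we would try first, is to state the induction directly for the time-varying schedule. The hypothesis would be $f(\vx_s)-f(\vx^\star)\le C M_\nu(\|\vx_1-\vx^\star\|^2/s)^{(1+\nu)/2}$ for $s<T$, possibly weakened by a factor $\left(T/(T+1-s)\right)^{\alpha}$ for late $s$.

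Next we plug the hypothesis in. The $v_0$ term gives
\begin{align*}
v_0 \leq \frac{2M_\nu^{1/(1+\nu)}}{\sum_{s\le T/2} (\eta/2)\, (f(\vx_s)-f^\star)^{-\nu/(1+\nu)}} \lesssim M_\nu\, \frac{\|\vx_1-\vx^\star\|^{2\nu}}{\eta^{1+\nu}\,T^{1+\nu}}\, C^{\nu/(1+\nu)},
\end{align*}
which is of order $M_\nu(\|\vx_1-\vx^\star\|^2/T)^{(1+\nu)/2}\,\|\vx_1-\vx^\star\|^{-2}$. For the sum term, $\sum_{s<T}\frac{\eta}{T} s^{-\nu/2}\|\vx_1-\vx^\star\|^{\nu}$ is of order $\eta T^{-\nu/2}\|\vx_1-\vx^\star\|^{\nu}$, again with no logarithm. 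The self-referential $f(\vx_T)$ term is closed by \cref{lemma:self_improving_bound}. Because $C$ appears only to the power $\nu/(1+\nu)<1$ on the right, a large enough absolute $C$ closes the induction.

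The main obstacle is the late iterates $s$ close to $T$. There the weight bound $\eta(T-s)/T\cdot\sum_{t<s}(T-t)^{-2}$ must be checked carefully, including the boundary $t=T-1$ where $\sum_{s>t}\tilde{\eta}_s=\tilde{\eta}_T$ is tiny; this is why the horizon is $T+1$ rather than $T$. We also need the induction hypothesis to hold for those $s$ even though the remaining stepsizes are much smaller than $\|\vx_1-\vx^\star\|/\sqrt{s}$. We expect the weights to compensate, and we would verify this by bounding $\sum_{t<s}\tilde{\eta}_t^2/(\sum_{r>t}\tilde{\eta}_r)^2$ directly rather than through the $(T-t)^{-2}$ proxy.
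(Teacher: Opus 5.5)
Your reduction through \cref{lemma:zamani}, the weighted Cauchy--Schwarz step, and the observation that the weight $\tilde{\eta}_s\sum_{t<s}4/(T-t)^2$ on $\|\nabla f(\vx_s)\|$ is $O(\eta/T)$ are correct and match the paper's starting point. The gap is in the next step. You convert each $\|\nabla f(\vx_s)\|$ into $(f(\vx_s)-f^\star)^{\nu/(1+\nu)}$, and this requires a logarithm-free bound on every individual iterate, obtained by strong induction. No such bound is available.

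Your preferred hypothesis $f(\vx_s)-f^\star\le CM_\nu(\|\vx_1-\vx^\star\|^2/s)^{(1+\nu)/2}$ is false for early iterates. Take $f(\vx)=\frac12\|\vx\|^2$, $\vx^\star=0$, and $\|\vx_1\|=D$. Each step moves at most $D/\sqrt{T}$ along the ray towards $0$. So for $s\approx\sqrt{T}/2$ you have $\|\vx_s\|\ge D/2$ and $f(\vx_s)\ge D^2/8$, while the hypothesis demands $f(\vx_s)=O(CD^2/\sqrt{T})$; this fails once $T\gg C^2$.

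The valid early bound has the form of \cref{lemma:average_rate_with_constant}: $(D\sqrt{T}/s)^{1+\nu}$ times $(1+\log s)^{1+\nu}$. Your Option~1 claim that this logarithm is ``absorbed'' is wrong. The weights $\eta s/T^2$ are already of order $\eta/T$ for $s\in[T/4,T/2]$, and these $s$ alone contribute $M_\nu(D^2/T)^{(1+\nu)/2}(1+\log T)^{\nu}$. The same log enters your bound on $v_0$.

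For $s>T/2$ you give no argument (``we expect the weights to compensate''). Yet each such iterate also carries weight of order $\eta/T$, so you would need $\|\nabla f(\vx_s)\|=O(M_\nu(D^2/T)^{\nu/2})$ individually. For $s\in[T/2,3T/4]$ the schedule up to $s$ varies by at most a factor $4$. You are therefore asking for a log-free last-iterate bound in an essentially constant-step regime, which is exactly where the logarithm of \cref{theorem:convergenec_rate_of_last} originates.

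The missing idea is to never bound individual gradient norms. \Cref{lemma:holder_smooth} with $\|\vx_{t+1}-\vx_t\|=\tilde{\eta}_t$ gives the descent inequality $f(\vx_{t+1})\le f(\vx_t)-\tilde{\eta}_t\|\nabla f(\vx_t)\|+M_\nu\tilde{\eta}_t^{1+\nu}$. Telescoping it yields $\mathcal{E}(r)\coloneqq\sum_{t=r}^T\tilde{\eta}_t\|\nabla f(\vx_t)\|\le f(\vx_r)-f^\star+M_\nu\sum_{t\ge r}\tilde{\eta}_t^{1+\nu}$. A whole tail of weighted gradient norms is thus controlled by the suboptimality of a single iterate.

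With $r=1$ this gives $v_0\le 4\mathcal{E}(1)/(\tilde{\eta}_0^2T^2)$, with no information about intermediate iterates. For the sum, bound the $t$-th term by $4\mathcal{E}(t+1)/(T-t)^2$ and group $t$ into the dyadic blocks $\mathcal{I}_j$, on which the stepsize halves. For $t\in\mathcal{I}_j$ use $\mathcal{E}(t+1)\le\mathcal{E}(s_{j-1})$, where $s_{j-1}\in\mathcal{I}_{j-1}$ satisfies $f(\vx_{s_{j-1}})-f^\star=O(M_\nu(D^2/T)^{(1+\nu)/2})$.

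Such an iterate exists in every block by \cref{lemma:upper_bound_of_phi}. That lemma is a best-iterate argument in $\Phi=(f-f^\star)^{1/(1+\nu)}$, restarted from the previous block's good iterate via \cref{lemma:difference_of_function_values}, with errors decaying geometrically in $j$. Only one good iterate per block is needed, rather than a bound on every iterate, and that is what removes the logarithm.
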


\paragraph{Discussion.} 
\Cref{theorem:convergenec_rate_of_last_with_adaptive_stepsize} shows that the linearly decreasing stepsize removes the logarithmic overhead of \cref{theorem:convergenec_rate_of_last}: the last iterate attains the same order $\mathcal{O}\bigl(M_\nu (\|\vx_1 - \vx^\star\|^2/T)^{(1+\nu)/2}\bigr)$ as the best-iterate and weighted-average guarantees in \cref{proposition:best_iterate,proposition:weight_averaged_iterate}, and the stepsize requires no knowledge of $\nu$ or $M_\nu$.
For $\nu = 0$, this recovers, up to absolute constants, the result of \citet{zamani2025exact}, who showed that the normalized subgradient method with the linearly decreasing stepsize $\tilde{\eta}_t = \|\vx_1 - \vx^\star\| (1 - \tfrac{t}{T}) / \sqrt{T}$ attains the last-iterate rate $\mathcal{O}(1/\sqrt{T})$ for convex functions with bounded subgradients; their schedule differs from \cref{eq:ngd_with_adaptive_stepsize} only by the shift $T \to T + 1$.
\Cref{theorem:convergenec_rate_of_last_with_adaptive_stepsize} extends this guarantee to the whole H\"older family $\nu \in [0, 1]$.

\subsection{Proof Sketch}

We now explain why a linearly decreasing stepsize removes the logarithmic factor.
As discussed in \cref{sec:proof_sketch_of_constant_stepsize}, this factor comes from the harmonic sum $\sum_{s} \frac{1}{T - s}$ in the bound on $\tilde{\eta}^2 \sum_{t} v_t$ (see \cref{eq:t2}), which is dominated by the iterates $s$ close to $T$.
A stepsize that decreases towards the end of the run reduces the weight of exactly these iterates.

Similar to \cref{sec:proof_sketch_of_constant_stepsize}, we choose $\{ v_t \}_t$ as in \cref{eq:definition_of_v_for_last_iterate}, so that \cref{lemma:zamani} gives
\begin{align*}
    2 \left( f (\vx_T) - f (\vx^\star) \right)
    &\leq v_{0} \| \vx_1 - \vx^\star \|^2 + \sum_{t=1}^T v_t \tilde{\eta}_t^2,
\end{align*}
and it remains to bound $v_0$ and $\sum_{t=1}^T v_t \tilde{\eta}_t^2$ from above.
For the linear schedule, we have the identity $\sum_{s=t+1}^T \tilde{\eta}_s = \tilde{\eta}_t (T - t)/2$, and the Cauchy--Schwarz inequality then yields $v_t \tilde{\eta}_t^2 \leq \frac{4}{(T-t)^2} \sum_{s=t+1}^T \tilde{\eta}_s \| \nabla f(\vx_s) \|$ for every $t < T$; the sums $\sum_{s = t+1}^T \tilde{\eta}_s \| \nabla f (\vx_s) \|$ are in turn controlled by the suboptimality gaps $f(\vx_s) - f(\vx^\star)$ (\cref{lemma:upper_bound_of_e}).

For the constant stepsize, we bounded $f (\vx_t) - f (\vx^\star)$ for every $t$, and the bound necessarily contained a logarithmic factor.
For the linearly decreasing stepsize, we instead show that $f (\vx_t) - f (\vx^\star)$ is bounded by $\mathcal{O} (T^{- \frac{1+\nu}{2}})$ on a sequence of iterates that becomes denser towards the end of the run.
Specifically, we partition the iterations into the blocks
\begin{align*}
    \mathcal{I}_j &\coloneqq \left\{ t \in \{ 1, \dots, T\} \,\,\middle|\,\, 2^{- (j+1)} < 1 - \frac{t}{T+1} \leq 2^{-j} \right\} \quad \text{for all $j \in \{ 0, \dots, J \}$},
\end{align*}
where $J \coloneqq \lceil \log_2 ( \tfrac{T+1}{2 \sqrt{T}} ) \rceil$, so that the stepsize is halved from one block to the next and $|\mathcal{I}_j| \approx (T+1)/2^{j+1}$. This decomposition is similar in spirit to that of \citet{jain2019making}, who halve the stepsize over phases of geometrically decreasing length to remove the logarithmic factor from the last-iterate rate of SGD. In contrast, our stepsize decreases linearly, and the blocks $\mathcal{I}_j$ are used only in the analysis.
We show that each block contains an iterate $s_j \in \mathcal{I}_j$ with
\begin{align*}
    f (\vx_{s_j}) - f (\vx^\star) &\leq 23104 M_\nu \left( \frac{\| \vx_1 - \vx^\star\|^2}{T} \right)^{\frac{1 + \nu}{2}}.
\end{align*}
See \cref{lemma:upper_bound_of_phi} in details.
In words, the best-iterate guarantee of \cref{proposition:best_iterate} ensures that the rate $\mathcal{O}(T^{- \frac{1 + \nu}{2}})$ is attained at least once during the run; with the linearly decreasing stepsize, it is attained once in every block, and the blocks shrink geometrically towards the end of the run.
Since the iterates close to $T$ are the ones that generate the harmonic sum, having such good iterates in every block allows us to bound $v_0$ and $\sum_{t} v_t \tilde{\eta}_t^2$ without a logarithmic factor, which yields \cref{theorem:convergenec_rate_of_last_with_adaptive_stepsize}.
The last at most $\sqrt{T}$ iterations, which are not covered by the blocks, have stepsize $\tilde{\eta}_t$ at most $\|\vx_1 - \vx^\star\|/(T+1)$, and their total contribution is negligible.

\section{Conclusion}
\label{sec:conclusion}
We studied the last-iterate convergence of normalized gradient descent for convex and H\"older-smooth objectives.
With a constant stepsize, the last iterate achieves $\mathcal{O}\bigl((\log^2(T)/T)^{(1+\nu)/2}\bigr)$, which is slower than the known best-iterate and weighted-average rates by a logarithmic factor; this factor is unavoidable for $\nu = 0$, and our PEP-based numerical results indicate that the last iterate is also slower than $\mathcal{O}(1/T)$ in the smooth case.
With a linearly decreasing stepsize, the logarithmic factor disappears and the last iterate attains the rate $\mathcal{O}\bigl(T^{-(1+\nu)/2}\bigr)$ without knowledge of the H\"older parameters.

\section*{Acknowledgement}
Part of this work was conducted while YT was visiting MBZUAI.

\section*{AI use statement}

In this work, we used generative AI tools for language editing and proofreading of the manuscript, including improving grammar, clarity, and readability.
We have not used generative AI tools to develop the main research ideas,
derive theoretical results, or write the code in our experiments.
We have reviewed all AI-assisted work. We take responsibility for the final content of this work, including text, claims or artifacts produced with the aid of generative AI.

\bibliography{ref}
\bibliographystyle{iclr2027_conference}

\newpage
\appendix
\crefalias{section}{appendix}

\section{Importance of Convergence Analysis in the Convex Setting}
\label{sec:importance_of_convexity}
In this paper, we assume that the objective is convex.
In practical applications, most loss functions are non-convex; nevertheless, analysis in the convex setting remains highly important.
One reason is that convergence analyses under non-convex assumptions are often overly pessimistic. Since optimization theory typically studies worst-case functions, if one allows arbitrary non-convex functions, the resulting guarantees can become too conservative to accurately reflect the behavior observed in practical problems.
For example, under the assumptions of non-convexity and smoothness, gradient descent is known to be an optimal first-order optimization method for finding approximate stationary points, and its convergence rate cannot be further improved in general \citep{carmon2021lower}.
Furthermore, several recent works report that predictions of convex optimization theory match the behavior of neural network training surprisingly well, e.g., regarding learning-rate schedules \citep{defazio2024the,schaipp2025the}.
We believe that our analysis of normalized gradient descent in the convex setting provides useful insight beyond the convex regime and may serve as a basis for understanding its empirical effectiveness in broader non-convex problems.

\section{Useful Lemmas}

\begin{lemma}[\citet{nesterov2016universal}]
\label{lemma:holder_smooth}
Suppose that \cref{assumption:holder_smoothness} holds. Then, for any $\vx, \vy \in \mathbb{R}^d$, it holds that
\begin{align*}
    f(\vy) \leq f (\vx) + \langle \nabla f (\vx), \vy - \vx \rangle + \frac{M_\nu}{1 + \nu} \left\| \vx - \vy \right\|^{1+\nu}.
\end{align*}
\end{lemma}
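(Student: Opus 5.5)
The statement to prove is the H\"older descent inequality of \citet{nesterov2016universal}. I would prove it by writing $f(\vy)-f(\vx)$ as an integral of the gradient along the segment joining $\vx$ and $\vy$, and then controlling the deviation of the gradient from $\nabla f(\vx)$ with \cref{assumption:holder_smoothness}.

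Fix $\vx,\vy$ and set $\phi(\tau)\coloneqq f(\vx+\tau(\vy-\vx))$ for $\tau\in[0,1]$. For $\nu\in(0,1]$, \cref{eq:holder_smooth} makes $\nabla f$ continuous, so $f$ is continuously differentiable. Hence $\phi$ is $C^1$ with $\phi'(\tau)=\langle \nabla f(\vx+\tau(\vy-\vx)),\vy-\vx\rangle$, and the fundamental theorem of calculus gives
\begin{align*}
    f(\vy)-f(\vx)-\langle \nabla f(\vx),\vy-\vx\rangle=\int_0^1 \langle \nabla f(\vx+\tau(\vy-\vx))-\nabla f(\vx),\vy-\vx\rangle\, d\tau .
\end{align*}
By Cauchy--Schwarz and \cref{eq:holder_smooth}, the integrand is at most $M_\nu\tau^\nu\|\vy-\vx\|^{1+\nu}$. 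Since $\int_0^1\tau^\nu d\tau=\frac{1}{1+\nu}$, this yields exactly the claimed bound.

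For $\nu=0$ the gradient need not be continuous, and $\nabla f$ may denote an arbitrary subgradient. Here I would argue directly instead of using the chain rule. Apply the subgradient inequality at $\vy$ to obtain $f(\vy)-f(\vx)\leq\langle\nabla f(\vy),\vy-\vx\rangle$. Then write $\langle\nabla f(\vy),\vy-\vx\rangle=\langle\nabla f(\vx),\vy-\vx\rangle+\langle\nabla f(\vy)-\nabla f(\vx),\vy-\vx\rangle$ and bound the second term by $M_0\|\vy-\vx\|$ via \cref{eq:holder_smooth} and Cauchy--Schwarz. This matches the $\nu=0$ bound, since $\frac{M_0}{1+0}=M_0$.

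The only step needing care is justifying the integral representation, i.e., that $\phi$ is absolutely continuous with the stated derivative. For $\nu>0$ this is immediate from the continuity of $\nabla f$. For $\nu=0$ the direct convexity argument above avoids the issue altogether. Everything else is routine.
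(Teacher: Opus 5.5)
Your proof is correct. The paper gives no proof of its own and simply cites \citet{nesterov2016universal}; your integral argument along the segment for $\nu\in(0,1]$ is the standard derivation behind that citation. For $\nu=0$, your direct argument uses the subgradient inequality at $\vy$, i.e., convexity, which the lemma does not list as a hypothesis. It is, however, the paper's standing assumption and exactly what the Preliminaries say is relied upon when $\nu=0$. For a differentiable $f$, the mean value theorem applied to $\phi$ would give the same bound without convexity.
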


\begin{lemma}[Lemma A.4 in \citet{gorbunov2024high}]
\label{lemma:bound_of_grad_norm}
Suppose that \cref{assumption:holder_smoothness} holds. Then, for any $\vx \in \mathbb{R}^d$, it holds that
\begin{align*}
    \| \nabla f (\vx) \| \leq \left( \frac{1 + \nu}{\nu} \right)^\frac{\nu}{1 + \nu} M_\nu^\frac{1}{1 + \nu} \left( f (\vx) - f (\vx^\star) \right)^\frac{\nu}{1 + \nu},
\end{align*}
where for $\nu=0$, we use $\lim_{\nu \rightarrow 0} ( \tfrac{1 + \nu}{\nu} )^{\frac{\nu}{1 + \nu}} = 1$.
\end{lemma}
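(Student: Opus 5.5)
Since \(f\) is convex and differentiable with minimizer \(\vx^\star\), we have \(\nabla f(\vx^\star)=0\). The plan is to compare \(f\) at \(\vx\) with \(f\) at a well-chosen trial point \(\vy\) and use \(f(\vy)\ge f(\vx^\star)\). For \(\nu=0\) the claim is immediate from \cref{assumption:holder_smoothness} with \(\vy=\vx^\star\), which gives \(\|\nabla f(\vx)\|\le M_0\). For \(\nu=0\) the stated right-hand side is interpreted as \(M_0\) via the limit convention, so the content is the case \(\nu\in(0,1]\).

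For \(\nu\in(0,1]\), fix \(\vx\) with \(g\coloneqq\|\nabla f(\vx)\|>0\); otherwise there is nothing to prove. Take the gradient step \(\vy=\vx-h\,\nabla f(\vx)/g\) with length \(h>0\) to be chosen. By \cref{lemma:holder_smooth},
\begin{align*}
    f(\vx^\star)\le f(\vy)\le f(\vx) - h g + \frac{M_\nu}{1+\nu} h^{1+\nu},
\end{align*}
so for every \(h>0\) we have \(hg - \frac{M_\nu}{1+\nu}h^{1+\nu}\le f(\vx)-f(\vx^\star)\).

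The next step is to optimize over \(h\). The left-hand side is maximized at \(h^\star=(g/M_\nu)^{1/\nu}\). Substituting gives
\begin{align*}
    \frac{\nu}{1+\nu}\, g^{\frac{1+\nu}{\nu}} M_\nu^{-\frac{1}{\nu}} \le f(\vx)-f(\vx^\star).
\end{align*}
Raising both sides to the power \(\nu/(1+\nu)\) yields
\begin{align*}
    g\le \left(\frac{1+\nu}{\nu}\right)^{\frac{\nu}{1+\nu}} M_\nu^{\frac{1}{1+\nu}}\bigl(f(\vx)-f(\vx^\star)\bigr)^{\frac{\nu}{1+\nu}},
\end{align*}
which is exactly the claimed bound.

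The argument has no real obstacle. The only points requiring care are checking the exponent arithmetic in this last step and justifying the \(\nu\to0\) limiting constant. For the latter, note that \((1+\nu)/\nu\to\infty\) while \(\nu/(1+\nu)\to0\), and \(\frac{\nu}{1+\nu}\log\frac{1+\nu}{\nu}\to0\), so the constant tends to \(1\). This is consistent with the direct \(\nu=0\) bound \(g\le M_0\) obtained above.
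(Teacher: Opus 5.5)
Your proof is correct. The paper does not prove this lemma itself but cites it from \citet{gorbunov2024high}, and your argument is the standard proof of that result: plug the normalized step $\vy = \vx - h \nabla f(\vx)/\|\nabla f(\vx)\|$ into \cref{lemma:holder_smooth}, use $f(\vy) \geq f(\vx^\star)$, and optimize at $h = (\|\nabla f(\vx)\|/M_\nu)^{1/\nu}$; your exponent arithmetic checks out. One minor note: for $\nu \in (0,1]$ you use only that $\vx^\star$ is a global minimizer, not convexity, while for $\nu = 0$ the bound $\|\nabla f(\vx)\| \leq M_0$ relies on $\nabla f(\vx^\star) = 0$, which the paper adopts as a convention for subgradients.
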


\begin{lemma}
\label{lemma:self_improving_bound}
For any $x, a, b \geq 0$ and $\nu \in [0,1]$, if
\begin{align*}
    x \leq a x^\frac{\nu}{1 + \nu} + b,
\end{align*}
then it holds that
\begin{align*}
    x \leq a^{1 + \nu} + 2 b.
\end{align*}
\end{lemma}
\begin{proof}
From Young's inequality, we have
\begin{align*}
    a x^\frac{\nu}{1 + \nu} \leq \frac{\nu}{1 + \nu} x + \frac{a^{1 + \nu}}{1 + \nu}.
\end{align*}
Substituting this bound into the assumption gives $\frac{x}{1 + \nu} \leq \frac{a^{1+\nu}}{1+\nu} + b$, i.e., $x \leq a^{1+\nu} + (1+\nu) b \leq a^{1 + \nu} + 2 b$.
\end{proof}

\zamaniLemma*
\begin{proof}
Let $\vz_0 = \vx^\star$ and $\vz_t$ be defined recursively as follows:
\begin{align*}
    \vz_t = \left( 1 - \frac{v_{t-1}}{v_t} \right) \vx_t + \frac{v_{t-1}}{v_t} \vz_{t-1}.
\end{align*}

We have
\begin{align*}
    \| \vx_{t+1} - \vz_t \|^2
    &= \| \vx_t - \vz_t \|^2 - 2 \eta_t \langle \nabla f (\vx_t), \vx_t - \vz_t \rangle + \tilde{\eta}_t^2 \\
    &\leq \| \vx_t - \vz_t \|^2 - 2 \eta_t \left( f (\vx_t) - f (\vz_t) \right) + \tilde{\eta}_t^2 \\
    &\leq \left( \frac{v_{t-1}}{v_t} \right) \| \vx_t - \vz_{t-1} \|^2 - 2 \eta_t \left( f (\vx_t) - f (\vz_t) \right) + \tilde{\eta}_t^2,
\end{align*}
where we use \cref{assumption:convex} for the first inequality and $v_{t-1} \leq v_t$ for the last inequality.
Thus, we get
\begin{align*}
    2 \eta_t v_t \left( f (\vx_t) - f (\vz_t) \right)
    &\leq v_{t-1} \| \vx_t - \vz_{t-1} \|^2 - v_t \| \vx_{t+1} - \vz_t \|^2 + v_t \tilde{\eta}_t^2.
\end{align*}
Unrolling the definition of $\vz_t$ gives $\vz_t = \sum_{s=1}^t \frac{v_s - v_{s-1}}{v_t} \vx_s + \frac{v_0}{v_t} \vx^\star$, a convex combination since $\{v_t\}_t$ is nondecreasing and positive; hence, by the convexity of $f$, we have
\begin{align*}
    f (\vz_t) \leq \sum_{s=1}^t \frac{v_s - v_{s-1}}{v_t} f (\vx_s) + \frac{v_0}{v_t} f (\vx^\star).
\end{align*}
Multiplying this inequality by $2 \eta_t v_t$, summing the resulting bounds and the previous display over $t = 1, \dots, T$, exchanging the order of summation, and using $\vz_0 = \vx^\star$ and $\| \vx_{T+1} - \vz_T \|^2 \geq 0$, we obtain
\begin{align*}
    2 \sum_{t=1}^T \left[ \eta_t v_t - \left( \sum_{s=t}^T \eta_s \right) (v_t - v_{t-1}) \right] \left( f (\vx_t) - f (\vx^\star) \right)
    &\leq 2 \sum_{t=1}^T \eta_t v_t \left( f (\vx_t) - f (\vz_t) \right) \\
    &\leq v_{0} \| \vx_1 - \vx^\star \|^2 + \sum_{t=1}^T v_t \tilde{\eta}_t^2.
\end{align*}
\end{proof}

\section{Proof of \Cref{theorem:convergenec_rate_of_last}}
\label{sec:proof_of_last}

\LemmaaveragedRateWithConstant*
\begin{proof}
We prove the statement by induction on $t$.
For $t = 1$, \cref{lemma:holder_smooth} with $\vx = \vx^\star$ and $\nabla f(\vx^\star) = 0$ gives
\begin{align*}
    f (\vx_1) - f (\vx^\star) \leq \frac{M_\nu}{1 + \nu} \| \vx_1 - \vx^\star\|^{1 + \nu},
\end{align*}
which is at most the right-hand side of \cref{eq:last_iterate_for_every_t} for $t = 1$ since $200 \geq \frac{1}{1+\nu}$.
Thus, the statement holds for $t=1$.

Now let $T \geq 2$, assume that $\tilde{\eta} \leq \frac{\| \vx_1 - \vx^\star \|}{\sqrt{T}}$, and assume that \cref{eq:last_iterate_for_every_t} holds for every $t \leq T-1$; note that the induction hypothesis is applicable to every $t \le T - 1$, because $\tilde{\eta} \leq \frac{\| \vx_1 - \vx^\star \|}{\sqrt{T}} \leq \frac{\| \vx_1 - \vx^\star \|}{\sqrt{t}}$.
We show that \cref{eq:last_iterate_for_every_t} holds for $t = T$. 
Let $\{ v_t \}_{t=0}^T$ be defined as follows:
\begin{align*}
    v_T &\coloneqq \frac{1}{\eta_T} \\
    v_t &\coloneqq \frac{1}{\sum_{s=t+1}^T \eta_s} \quad \text{(for all $t = 0, 1, \dots, T-1$)}.
\end{align*}
Note that $\{ v_t \}_{t=0}^T$ is positive and nondecreasing (in fact, $v_{T-1} = v_T$).
In this case, we have
\begin{align*}
    \eta_T v_T &= 1,
\end{align*}
and for any $t \in \{ 1, 2, \dots, T-1\}$, we have
\begin{align*}
        \eta_t v_t - \left( \sum_{s=t}^T \eta_s \right) (v_t - v_{t-1})
    &=  - \left( \sum_{s=t+1}^T \eta_s \right) v_t + \left( \sum_{s=t}^T \eta_s \right) v_{t-1}
    = 0.
\end{align*}
Thus, from Lemma~\ref{lemma:zamani}, we have
\begin{align*}
    2 \left( f (\vx_T) - f (\vx^\star) \right)
    &\leq v_{0} \| \vx_1 - \vx^\star \|^2 + \tilde{\eta}^2 \sum_{t=1}^T v_t.
\end{align*}

We first bound $v_0$. Dropping the term $s = T$, we have
\begin{align*}
    v_0 &= \frac{1}{\tilde{\eta} \sum_{s=1}^T \frac{1}{\| \nabla f (\vx_s) \|}}
    \leq \frac{1}{\tilde{\eta} \sum_{s=1}^{T-1} \frac{1}{\| \nabla f (\vx_s) \|}}.
\end{align*}
For any $s \leq T-1$, the induction hypothesis \cref{eq:last_iterate_for_every_t} and the inequality $\bigl( \frac{\| \vx_1 - \vx^\star \|^2}{s} \bigr)^{\frac{1+\nu}{2}} \leq \bigl( \frac{\| \vx_1 - \vx^\star \|^2}{\tilde{\eta} s} \bigr)^{1 + \nu}$, which follows from $\tilde{\eta} \leq \frac{\| \vx_1 - \vx^\star\|}{\sqrt{s}}$, give $f(\vx_s) - f(\vx^\star) \leq 1400 M_\nu \bigl( \frac{\| \vx_1 - \vx^\star \|^2}{\tilde{\eta} s} \bigr)^{1 + \nu} (1 + \log(s))^{1+\nu}$.
Combining this with \cref{lemma:bound_of_grad_norm}, $\bigl( \frac{1+\nu}{\nu} \bigr)^{\frac{\nu}{1+\nu}} \leq 2$, and $1400^{\frac{\nu}{1+\nu}} \leq \sqrt{1400} < 40$, it holds for any $s \leq T-1$ that
\begin{align*}
    \left\| \nabla f (\vx_s) \right\| \leq 80 M_\nu \left( \frac{\| \vx_1 - \vx^\star\|^2}{\tilde{\eta} s} \right)^\nu (1 + \log (s))^\nu.
\end{align*}
Applying the above inequality  and $\sum_{s=1}^{T-1} s^\nu \geq \frac{T^{1+\nu}}{4}$ for $T \geq 2$, we obtain
\begin{align*}
    v_0 &\leq \frac{80 M_\nu \| \vx_1 - \vx^\star\|^{2 \nu}}{\tilde{\eta}^{1 + \nu}} (1 + \log (T))^\nu \frac{1}{\sum_{s=1}^{T-1} s^\nu} \\
    &\leq 320 (1 + \log (T))^\nu M_\nu \left( \frac{\| \vx_1 - \vx^\star\|^{2\nu}}{\tilde{\eta}^{1 + \nu} T^{1 + \nu}} \right).
\end{align*}

Next, we bound $\sum_{t=1}^T v_t$:
\begin{align*}
    \sum_{t=1}^T v_t
    &= v_T + \frac{1}{\tilde{\eta}} \sum_{t=1}^{T-1} \frac{1}{\sum_{s=t+1}^T \frac{1}{\| \nabla f (\vx_s) \|}} \\
    &\leq v_T + \frac{1}{\tilde{\eta}} \sum_{t=1}^{T-1} \frac{1}{(T - t)^2} \sum_{s=t+1}^T \| \nabla f (\vx_s) \| \\
    &\leq v_T + \frac{1}{\tilde{\eta}} \left( \frac{1 + \nu}{\nu} \right)^\frac{\nu}{1 + \nu} M_\nu^\frac{1}{1 + \nu} \sum_{t=1}^{T-1} \frac{1}{(T - t)^2} \sum_{s=t+1}^T  \left( f (\vx_s) - f (\vx^\star) \right)^\frac{\nu}{1 + \nu} \\
    &\leq \frac{M_\nu^\frac{1}{1 + \nu}}{\tilde{\eta}} \left( \frac{1 + \nu}{\nu} \right)^\frac{\nu}{1 + \nu} \left( f (\vx_T) - f (\vx^\star) \right)^\frac{\nu}{1 + \nu}\\
    &\quad + \frac{M_\nu^\frac{1}{1 + \nu}}{\tilde{\eta}} \left( \frac{1 + \nu}{\nu} \right)^\frac{\nu}{1 + \nu} \sum_{t=1}^{T-1} \frac{1}{(T - t)^2} \sum_{s=t+1}^T  \left( f (\vx_s) - f (\vx^\star) \right)^\frac{\nu}{1 + \nu} \\
    &\leq \frac{M_\nu^\frac{1}{1 + \nu}}{\tilde{\eta}} \left( \frac{1 + \nu}{\nu} \right)^\frac{\nu}{1 + \nu} \left( f (\vx_T) - f (\vx^\star) \right)^\frac{\nu}{1 + \nu} \left( 1 + \sum_{t=1}^{T-1} \frac{1}{(T - t)^2} \right) \\
    &\quad + \frac{M_\nu^\frac{1}{1 + \nu}}{\tilde{\eta}} \left( \frac{1 + \nu}{\nu} \right)^\frac{\nu}{1 + \nu} \sum_{t=1}^{T-1} \frac{1}{(T - t)^2} \sum_{s=t+1}^{T-1}  \left( f (\vx_s) - f (\vx^\star) \right)^\frac{\nu}{1 + \nu} \\
    &\leq \frac{6 M_\nu^\frac{1}{1 + \nu}}{\tilde{\eta}}  \left( f (\vx_T) - f (\vx^\star) \right)^\frac{\nu}{1 + \nu}
    + \frac{2 M_\nu^\frac{1}{1 + \nu}}{\tilde{\eta}}  \sum_{t=1}^{T-1} \frac{1}{(T - t)^2} \sum_{s=t+1}^{T-1}  \left( f (\vx_s) - f (\vx^\star) \right)^\frac{\nu}{1 + \nu} \\
    &= \frac{6 M_\nu^\frac{1}{1 + \nu}}{\tilde{\eta}}  \left( f (\vx_T) - f (\vx^\star) \right)^\frac{\nu}{1 + \nu}
    + \frac{2 M_\nu^\frac{1}{1 + \nu}}{\tilde{\eta}}  \sum_{s=2}^{T-1}  \left( f (\vx_s) - f (\vx^\star) \right)^\frac{\nu}{1 + \nu} \sum_{t=1}^{s-1} \frac{1}{(T - t)^2},
\end{align*}
where we use the fact that the harmonic mean is smaller than or equal to the arithmetic mean for the first inequality, \cref{lemma:bound_of_grad_norm} (a consequence of \cref{assumption:holder_smoothness}) for the second and third inequalities, and $\bigl( \frac{1+\nu}{\nu} \bigr)^{\frac{\nu}{1+\nu}} \leq 2$ together with $1 + \sum_{t=1}^{T-1} \frac{1}{(T-t)^2} \leq 1 + \frac{\pi^2}{6} \leq 3$ for the fifth inequality; the last step exchanges the order of summation.
Then, we have

\begin{align*}
    &\frac{2 M_\nu^\frac{1}{1 + \nu}}{\tilde{\eta}}  \sum_{s=2}^{T-1}  \left( f (\vx_s) - f (\vx^\star) \right)^\frac{\nu}{1 + \nu} \sum_{t=1}^{s-1} \frac{1}{(T - t)^2} \\
    &= \frac{2 M_\nu^\frac{1}{1 + \nu}}{\tilde{\eta}} \underbrace{\sum_{s=2}^{\floor{\frac{T-1}{2}}}  \left( f (\vx_s) - f (\vx^\star) \right)^\frac{\nu}{1 + \nu} \sum_{t=1}^{s-1} \frac{1}{(T - t)^2}}_{\mathcal{T}_1}
    \\
    &\quad + \frac{2 M_\nu^\frac{1}{1 + \nu}}{\tilde{\eta}}  \underbrace{\sum_{s=\floor{\frac{T-1}{2}} + 1}^{T-1}  \left( f (\vx_s) - f (\vx^\star) \right)^\frac{\nu}{1 + \nu} \sum_{t=1}^{s-1} \frac{1}{(T - t)^2}}_{\mathcal{T}_2}.
\end{align*}

Using $s \leq \frac{T-1}{2}$, so that $T - t \geq \frac{T}{2}$ for all $t \leq s-1$, we have
\begin{align*}
    \mathcal{T}_1
    &\leq \sum_{s=2}^{\floor{\frac{T-1}{2}}}  \left( f (\vx_s) - f (\vx^\star) \right)^\frac{\nu}{1 + \nu} \frac{4 s}{T^2} \\
    &\leq 4 M_\nu^\frac{\nu}{1 + \nu} \frac{(1 + \log (T))^\nu}{T^2} \sum_{s=2}^{\floor{\frac{T-1}{2}}} \left( 35 \left( \frac{\| \vx_1 - \vx^\star \|^{2}}{\tilde{\eta}} \right)^{\nu} s^{1 - \nu}
    + 15 \| \vx_1 - \vx^\star\|^\nu s^{1 - \frac{\nu}{2}} \right) \\
    &\leq 4 M_\nu^\frac{\nu}{1 + \nu} \frac{(1 + \log (T))^\nu}{T^2} \left( 35 \left( \frac{\| \vx_1 - \vx^\star \|^{2 \nu}}{\tilde{\eta}^\nu } \right) T^{2 - \nu}
    + 15 \| \vx_1 - \vx^\star\|^\nu T^{2 - \frac{\nu}{2}} \right) \\
    &\leq 200 M_\nu^\frac{\nu}{1 + \nu} (1 + \log (T))^\nu \left( \frac{\| \vx_1 - \vx^\star \|^{2 \nu}}{\tilde{\eta}^{\nu} T^\nu } \right),
\end{align*}
where we use the induction hypothesis together with $(a + b)^{p} \leq a^{p} + b^{p}$ for $p = \frac{\nu}{1+\nu} \in [0, \frac12]$ in the second inequality, $\sum_{s=2}^{\lfloor (T-1)/2 \rfloor} s^{\alpha} \leq T^{1 + \alpha}$ for $\alpha \in [0, 1]$ in the third inequality, and $\tilde{\eta} \leq \tfrac{\| \vx_1 - \vx^\star\|}{\sqrt{T}} $, i.e., $\| \vx_1 - \vx^\star \|^{\nu} T^{-\nu/2} \leq \bigl( \frac{\| \vx_1 - \vx^\star \|^2}{\tilde{\eta} T} \bigr)^{\nu}$, in the last inequality.
Using $s \geq \floor{\frac{T-1}{2}} + 1$, i.e., $s \geq \frac{T}{2}$, and $T \geq 2$, we have
\begin{align*}
    \mathcal{T}_2
    &\leq  (1 + \log (T))^{\nu}  M_\nu^\frac{\nu}{1 + \nu} \sum_{s=\floor{\frac{T-1}{2}}}^{T-1}  \left( 70 \left( \frac{\| \vx_1 - \vx^\star \|^{2}}{\tilde{\eta} T} \right)^{\nu}
    + 30 \left( \frac{\| \vx_1 - \vx^\star\|^2}{T} \right)^\frac{\nu}{2} \right) \sum_{t=1}^{s-1} \frac{1}{(T - t)^2} \\
    &\leq  (1 + \log (T))^{\nu}  M_\nu^\frac{\nu}{1 + \nu} \sum_{s=\floor{\frac{T-1}{2}}}^{T-1}  \left( 70 \left( \frac{\| \vx_1 - \vx^\star \|^{2}}{\tilde{\eta} T} \right)^{\nu}
    + 30 \left( \frac{\| \vx_1 - \vx^\star\|^2}{T} \right)^\frac{\nu}{2} \right) \frac{1}{T - s} \\
    &\leq  (1 + \log (T))^{1 + \nu}  M_\nu^\frac{\nu}{1 + \nu}  \left( 70 \left( \frac{\| \vx_1 - \vx^\star \|^{2}}{\tilde{\eta} T} \right)^{\nu}
    + 30 \left( \frac{\| \vx_1 - \vx^\star\|^2}{T} \right)^\frac{\nu}{2} \right),
\end{align*}
where the first inequality uses the induction hypothesis as for $\mathcal{T}_1$ together with $s \geq \frac{T}{2}$ and extends the range of summation to $s \geq \lfloor \frac{T-1}{2} \rfloor$, the second inequality uses $\sum_{t=1}^{s-1} \frac{1}{(T-t)^2} \leq \sum_{u = T-s+1}^{\infty} \frac{1}{u^2} \leq \frac{1}{T - s}$, and the last inequality uses $\sum_{s = \lfloor (T-1)/2 \rfloor}^{T-1} \frac{1}{T-s} \leq 1 + \log(T)$.
Combining the above inequalities, we get
\begin{align*}
    2 \left( f (\vx_T) - f (\vx^\star) \right)
    &\leq 6 \tilde{\eta} M_\nu^\frac{1}{1 + \nu} \left( f (\vx_T) - f (\vx^\star) \right)^\frac{\nu}{1 + \nu} \\
    &+ (1 + \log (T))^{1 + \nu}  M_\nu  \left( 860 \left( \frac{\| \vx_1 - \vx^\star \|^{2}}{\tilde{\eta} T} \right)^{1 + \nu}
    + 60 \left( \frac{\| \vx_1 - \vx^\star\|^2}{T} \right)^\frac{1 + \nu}{2} \right).
\end{align*}
Then, using $\tilde{\eta} \leq \tfrac{\| \vx_1 - \vx^\star\|}{\sqrt{T}}$ and $T \geq 1$, we obtain
\begin{align*}
    f (\vx_T) - f (\vx^\star)
    &\leq \frac{4 \| \vx_1 - \vx^\star\|^2}{T \tilde{\eta}} M_\nu^\frac{1}{1 + \nu}  \left( f (\vx_T) - f (\vx^\star) \right)^\frac{\nu}{1 + \nu} \\
    &+ (1 + \log (T))^{1 + \nu}  M_\nu  \left( 430 \left( \frac{\| \vx_1 - \vx^\star \|^{2}}{\tilde{\eta} T} \right)^{1 + \nu}
    + 30 \left( \frac{\| \vx_1 - \vx^\star\|^2}{T} \right)^\frac{1 + \nu}{2} \right).
\end{align*}
From \cref{lemma:self_improving_bound}, we obtain
\begin{align*}
    f (\vx_T) - f (\vx^\star)
    &\leq (1 + \log (T))^{1 + \nu}  M_\nu  \left( 876 \left( \frac{\| \vx_1 - \vx^\star \|^{2}}{\tilde{\eta} T} \right)^{1 + \nu}
    + 60 \left( \frac{\| \vx_1 - \vx^\star\|^2}{T} \right)^\frac{1 + \nu}{2} \right).
\end{align*}
This implies \cref{eq:last_iterate_for_every_t} for $t = T$, which completes the induction.
Finally, substituting $\tilde{\eta} = \frac{\| \vx_1 - \vx^\star \|}{\sqrt{T}}$ and $t = T$ into \cref{eq:last_iterate_for_every_t} proves \cref{theorem:convergenec_rate_of_last} with the absolute constant $1400$; we have not attempted to optimize this constant.
\end{proof}

\section{Proof of \Cref{theorem:convergenec_rate_of_last_with_adaptive_stepsize}}
\label{sec:proof_of_adaptive_stepsize}
\paragraph{Notation.}
Throughout this section, $\tilde{\eta}_t = ( 1 - \tfrac{t}{T+1} ) \frac{\| \vx_1 - \vx^\star\|}{\sqrt{T}}$ for $t \geq 1$, and we also set $\tilde{\eta}_0 \coloneqq \frac{\| \vx_1 - \vx^\star \|}{\sqrt{T}}$.
We use the following notation:
\begin{align*}
    f^\star &\coloneqq f(\vx^\star), \\
    \Phi(\vx) &\coloneqq (f(\vx) - f^\star)^{\frac{1}{1+\nu}}, \\
    J &\coloneqq \left\lceil \log_2 \left( \frac{T+1}{2 \sqrt{T}} \right) \right\rceil, \\
    \mathcal{I}_j &\coloneqq \left\{ t \in \{ 1, \dots, T\} \mid 2^{- (j+1)} < 1 - \frac{t}{T+1} \leq 2^{-j} \right\} \quad \text{for all $j \in \{ 0, \dots, J \}$}.
\end{align*}
By the definition of $J$, we have $\frac{\sqrt{T}}{T+1} < 2^{-J} \leq \frac{2 \sqrt{T}}{T+1}$.
The blocks $\mathcal{I}_0, \dots, \mathcal{I}_J$ are consecutive intervals of integers: $\mathcal{I}_0 = \{1, \dots, \lfloor T/2 \rfloor\}$, every element of $\mathcal{I}_{j-1}$ is smaller than every element of $\mathcal{I}_j$, and, for $T \geq 16$, $\max \mathcal{I}_J \leq T - 2$ and $|\mathcal{I}_J| \geq 1$, because $(T+1) 2^{-(J+1)} > \frac{\sqrt{T}}{2} \geq 2$.
The iterations $t > \max \mathcal{I}_J$ that are not covered by the blocks satisfy $T + 1 - t \leq (T+1) 2^{-(J+1)} \leq \sqrt{T}$ and $\tilde{\eta}_t \leq \frac{\| \vx_1 - \vx^\star \|}{T + 1}$.

\begin{lemma}
\label{lemma:difference_of_function_value}
Suppose that \cref{assumption:holder_smoothness,assumption:convex} hold.
If $f(\vx) \geq f (\vy)$ and $\nabla f(\vx) \neq 0$, it holds that
\begin{align*}
    \Phi (\vx) - \Phi (\vy)
    \leq 2 M_\nu^\frac{1}{1 + \nu} \left\langle \frac{\nabla f (\vx)}{\| \nabla f (\vx)\|}, \vx - \vy \right\rangle,
\end{align*}
where $\Phi (\vx) \coloneqq \left( f (\vx) - f^\star \right)^\frac{1}{1 + \nu}$.
In particular, when $\vy = \vx^\star$ we have
\begin{align*}
    \Phi (\vx)
    \leq 2 M_\nu^\frac{1}{1 + \nu} \left\langle \frac{\nabla f (\vx)}{\| \nabla f (\vx)\|}, \vx - \vx^\star \right\rangle.
\end{align*}
\end{lemma}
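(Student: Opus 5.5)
The plan is to chain three facts: convexity at $\vx$, the gradient-norm bound of \cref{lemma:bound_of_grad_norm}, and an elementary inequality for powers.

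First, $\nabla f(\vx) \neq 0$ forces $f(\vx) > f^\star$. Otherwise $\vx$ would be a minimizer, and the gradient there would vanish (for $\nu = 0$ we use the convention $\nabla f(\vx^\star)=0$). Hence $\Phi(\vx) > 0$.

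Next, \cref{assumption:convex} applied at $\vx$ gives
\begin{align*}
    f(\vx) - f(\vy) \leq \langle \nabla f(\vx), \vx - \vy \rangle ,
\end{align*}
and dividing by $\|\nabla f(\vx)\| > 0$ yields
\begin{align*}
    \frac{f(\vx) - f(\vy)}{\|\nabla f(\vx)\|} \leq \left\langle \frac{\nabla f(\vx)}{\|\nabla f(\vx)\|}, \vx - \vy \right\rangle .
\end{align*}
By \cref{lemma:bound_of_grad_norm} and $\bigl(\frac{1+\nu}{\nu}\bigr)^{\frac{\nu}{1+\nu}} \leq 2$ (the same bound used in \cref{sec:proof_of_last}), we have
\begin{align*}
    \|\nabla f(\vx)\| \leq 2 M_\nu^{\frac{1}{1+\nu}} \Phi(\vx)^\nu .
\end{align*}
Because $f(\vx) - f(\vy) \geq 0$, replacing the denominator by this larger quantity only decreases the left-hand side. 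Writing $f(\vx) - f(\vy) = \Phi(\vx)^{1+\nu} - \Phi(\vy)^{1+\nu}$, we get
\begin{align*}
    \frac{\Phi(\vx)^{1+\nu} - \Phi(\vy)^{1+\nu}}{2 M_\nu^{\frac{1}{1+\nu}} \Phi(\vx)^\nu} \leq \left\langle \frac{\nabla f(\vx)}{\|\nabla f(\vx)\|}, \vx - \vy \right\rangle .
\end{align*}

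It remains to show that $\Phi(\vx)^{1+\nu} - \Phi(\vy)^{1+\nu} \geq \Phi(\vx)^\nu \bigl(\Phi(\vx) - \Phi(\vy)\bigr)$. This is equivalent to
\begin{align*}
    \Phi(\vx)^\nu \Phi(\vy) \geq \Phi(\vy)^{1+\nu} ,
\end{align*}
which holds because $\Phi(\vx) \geq \Phi(\vy) \geq 0$; this is exactly where the hypothesis $f(\vx) \geq f(\vy)$ is used. Dividing by $\Phi(\vx)^\nu > 0$ then gives the claim.

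The particular case follows by taking $\vy = \vx^\star$, for which $\Phi(\vx^\star) = 0$.

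The only delicate points are the sign and positivity bookkeeping: $\Phi(\vx) > 0$, and the direction of the inequality when enlarging the denominator. The case $\nu = 0$ goes through with the conventions $\Phi(\vx)^0 = 1$ and the $\nu \to 0$ limit in \cref{lemma:bound_of_grad_norm}.
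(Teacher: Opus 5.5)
Your proof is correct and uses the same three ingredients as the paper, only in a different order: convexity at $\vx$, \cref{lemma:bound_of_grad_norm} with $\bigl(\frac{1+\nu}{\nu}\bigr)^{\frac{\nu}{1+\nu}} \leq 2$, and the elementary inequality $a^{p} - b^{p} \leq a^{p-1}(a-b)$ with $a = f(\vx) - f^\star$, $b = f(\vy) - f^\star$, $p = \frac{1}{1+\nu}$, which is exactly your $\Phi(\vx)^{1+\nu} - \Phi(\vy)^{1+\nu} \geq \Phi(\vx)^\nu(\Phi(\vx) - \Phi(\vy))$. One small quibble is your justification of $\Phi(\vx) > 0$ for $\nu = 0$: the convention $\nabla f(\vx^\star) = 0$ fixes the subgradient only at the particular minimizer $\vx^\star$, so it does not apply at other minimizers, but positivity is only needed for $\nu > 0$ (where $f$ is differentiable and your argument is valid), and for $\nu = 0$ the factor $\Phi(\vx)^0 = 1$ lets your chain go through unchanged.
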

\begin{proof}
Using $f(\vx) \geq f (\vy)$, we have
\begin{align*}
    \left( f (\vx) - f^\star \right)^\frac{1}{1 + \nu} -     \left( f (\vy) - f^\star \right)^\frac{1}{1 + \nu}
    &\leq \left( f (\vx) - f^\star \right)^\frac{- \nu}{1 + \nu} \left( f (\vx)  - f (\vy) \right) \\
    &\leq \left( f (\vx) - f^\star \right)^\frac{- \nu}{1 + \nu} \left\langle \nabla f (\vx), \vx - \vy \right\rangle \\
    &\leq 2 M_\nu^\frac{1}{1 + \nu} \left\langle \frac{\nabla f (\vx)}{\| \nabla f (\vx) \|}, \vx - \vy \right\rangle,
\end{align*}
where the first inequality uses $a^{p} - b^{p} \leq a^{p-1} (a - b)$ for $a \geq b \geq 0$ and $p = \frac{1}{1+\nu} \in [\frac12, 1]$, the second inequality uses \cref{assumption:convex}, and the last inequality uses $\langle \nabla f(\vx), \vx - \vy \rangle \geq f(\vx) - f(\vy) \geq 0$ and \cref{lemma:bound_of_grad_norm} in the form $(f(\vx) - f^\star)^{-\frac{\nu}{1+\nu}} \| \nabla f(\vx) \| \leq 2 M_\nu^{\frac{1}{1+\nu}}$.
\end{proof}

\begin{lemma}
\label{lemma:difference_of_function_values}
Suppose that \cref{assumption:holder_smoothness,assumption:convex} hold.
Let $1 \leq s \leq p \leq q \leq T$.
There exists $r \in \{ p, \dots, q \}$ such that
\begin{align*}
    \Phi (\vx_r) \leq \Phi (\vx_s) + M_\nu^\frac{1}{1 + \nu} \frac{\sum_{t=s}^q \tilde{\eta}_t^2}{\sum_{t=p}^q \tilde{\eta}_t},
\end{align*}
where $\Phi (\vx) \coloneqq \left( f (\vx) - f^\star \right)^\frac{1}{1 + \nu}$.
\end{lemma}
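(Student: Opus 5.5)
The plan is a standard "anchor at a fixed earlier iterate" distance argument, combined with \cref{lemma:difference_of_function_value}. Throughout, write $\vy$ for the anchor and $\bm{n}_t \coloneqq \nabla f(\vx_t)/\|\nabla f(\vx_t)\|$.

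\textbf{Choice of anchor.} Anchoring directly at $\vx_s$ is not enough: for $t \in \{s, \dots, p-1\}$ we could have $f(\vx_t) < f(\vx_s)$, and then $\langle \bm{n}_t, \vx_t - \vx_s \rangle$ could be negative. To avoid this, I will first let $s' \in \{s, \dots, p\}$ be an index minimizing $f(\vx_t)$ over $t \in \{s, \dots, p\}$, and set $\vy \coloneqq \vx_{s'}$. Then $f(\vy) \leq f(\vx_s)$, hence $\Phi(\vy) \leq \Phi(\vx_s)$, and $f(\vx_t) \geq f(\vy)$ for all $t \in \{s', \dots, p\}$.

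\textbf{Easy case and setup of the main case.} If some $r \in \{p, \dots, q\}$ has $f(\vx_r) < f(\vy)$, then $\Phi(\vx_r) < \Phi(\vy) \leq \Phi(\vx_s)$ and we are done. Otherwise $f(\vx_t) \geq f(\vy)$ for every $t \in \{s', \dots, q\}$. By convexity this gives
\[
\langle \bm{n}_t, \vx_t - \vy \rangle \geq \frac{f(\vx_t) - f(\vy)}{\|\nabla f(\vx_t)\|} \geq 0
\]
for all these $t$.

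\textbf{Distance recursion and summation.} Expanding the update gives
\[
\|\vx_{t+1} - \vy\|^2 = \|\vx_t - \vy\|^2 - 2 \tilde{\eta}_t \langle \bm{n}_t, \vx_t - \vy \rangle + \tilde{\eta}_t^2 .
\]
I then proceed as follows.
\begin{itemize}
\item Sum this identity from $t = s'$ to $q$, using $\vx_{s'} = \vy$ and dropping $\|\vx_{q+1} - \vy\|^2 \geq 0$. This yields
\[
2 \sum_{t=s'}^q \tilde{\eta}_t \langle \bm{n}_t, \vx_t - \vy \rangle \leq \sum_{t=s'}^q \tilde{\eta}_t^2 \leq \sum_{t=s}^q \tilde{\eta}_t^2 .
\]
\item Since every summand on the left is nonnegative, drop the terms with $t < p$.
\item For $t \in \{p, \dots, q\}$, apply \cref{lemma:difference_of_function_value} with $\vx = \vx_t$ (its hypotheses hold because $f(\vx_t) \geq f(\vy)$ and gradients are nonzero):
\[
\Phi(\vx_t) - \Phi(\vy) \leq 2 M_\nu^{\frac{1}{1+\nu}} \langle \bm{n}_t, \vx_t - \vy \rangle .
\]
\end{itemize}
Combining these steps,
\[
\Bigl( \sum_{t=p}^q \tilde{\eta}_t \Bigr) \min_{p \leq t \leq q} \bigl( \Phi(\vx_t) - \Phi(\vy) \bigr) \leq \sum_{t=p}^q \tilde{\eta}_t \bigl( \Phi(\vx_t) - \Phi(\vy) \bigr) \leq M_\nu^{\frac{1}{1+\nu}} \sum_{t=s}^q \tilde{\eta}_t^2 .
\]
The factor $2$ from \cref{lemma:difference_of_function_value} cancels with the $2$ from the distance recursion.

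\textbf{Conclusion and main obstacle.} Take $r$ to be the minimizer on the left. Dividing by $\sum_{t=p}^q \tilde{\eta}_t > 0$ and using $\Phi(\vy) \leq \Phi(\vx_s)$ gives the claim. The only delicate point is the sign issue for the intermediate iterates between $s$ and $p$. Choosing the anchor as the best iterate on $\{s, \dots, p\}$ is exactly what resolves it, and it is also why the numerator involves the sum $\sum_{t=s}^q$ rather than $\sum_{t=p}^q$.
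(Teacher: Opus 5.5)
Your proposal is correct and follows essentially the same argument as the paper. Both anchor at the index minimizing $f$ over $\{s,\dots,p\}$ and dispose of the case where some later iterate beats it. Both then sum the distance recursion from the anchor to $q$, drop the terms with $t<p$ as nonnegative by convexity, and apply \cref{lemma:difference_of_function_value} on $\{p,\dots,q\}$ before taking the minimizing $r$.
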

\begin{proof}
Choose $s^\star \in \{ s, \dots, p \}$ minimizing $f (\vx_t)$ on this interval.

If some $r \in \{ p, \dots, q \}$ satisfies $f (\vx_r) < f (\vx_{s^\star})$, the statement holds, since then $\Phi(\vx_r) < \Phi(\vx_{s^\star}) \leq \Phi(\vx_s)$.
Otherwise, $f (\vx_t) \geq f (\vx_{s^\star})$ for all $t \in \{ p, \dots, q\}$.
For any $t \in \{ p, \dots, q\}$, \cref{lemma:difference_of_function_value} implies that
\begin{align*}
    \Phi (\vx_t) - \Phi (\vx_{s^\star})
    \leq 2 M_\nu^\frac{1}{1 + \nu} \left\langle \frac{\nabla f (\vx_t)}{\| \nabla f (\vx_t)\|}, \vx_t - \vx_{s^\star} \right\rangle.
\end{align*}
The update rule of normalized gradient descent gives
\begin{align*}
    \| \vx_{t+1} - \vx_{s^\star} \|^2
    = \| \vx_t - \vx_{s^\star} \|^2
    - 2 \tilde{\eta}_t \left\langle \frac{\nabla f (\vx_t)}{\| \nabla f (\vx_t) \|}, \vx_t - \vx_{s^\star} \right\rangle
    + \tilde{\eta}_t^2.
\end{align*}
Summing this identity from $t = s^\star$ to $q$ and dropping the nonnegative term $\| \vx_{q+1} - \vx_{s^\star} \|^2$ gives $2 \sum_{t=s^\star}^{q} \tilde{\eta}_t \langle \frac{\nabla f (\vx_t)}{\| \nabla f (\vx_t) \|}, \vx_t - \vx_{s^\star} \rangle \leq \sum_{t=s^\star}^{q} \tilde{\eta}_t^2$.
Moreover, for $t \in \{ s^\star, \dots, p - 1 \}$ we have $f(\vx_t) \geq f(\vx_{s^\star})$ by the choice of $s^\star$, so \cref{assumption:convex} gives $\langle \nabla f(\vx_t), \vx_t - \vx_{s^\star} \rangle \geq f(\vx_t) - f(\vx_{s^\star}) \geq 0$; hence, the terms with $t < p$ in the sum on the left-hand side are nonnegative and can be dropped.
Combining these facts with the previous inequality yields
\begin{align*}
    \sum_{t=p}^q \tilde{\eta}_t \left( \Phi (\vx_t) - \Phi (\vx_{s^\star}) \right)
    &\leq 2 M_\nu^\frac{1}{1 + \nu} \sum_{t=s^\star}^q \tilde{\eta}_t \left\langle \frac{\nabla f (\vx_t)}{\| \nabla f (\vx_t)\|}, \vx_t - \vx_{s^\star} \right\rangle \\
    &\leq M_\nu^\frac{1}{1 + \nu} \sum_{t=s^\star}^q \tilde{\eta}_t^2 \\
    &\leq M_\nu^\frac{1}{1 + \nu} \sum_{t=s}^q \tilde{\eta}_t^2,
\end{align*}
where we use $s \leq s^\star \leq p$.
Choosing $r \in \{ p, \dots, q\}$ minimizing $\Phi (\vx_t)$ and using $\bigl( \Phi(\vx_r) - \Phi(\vx_{s^\star}) \bigr) \sum_{t=p}^{q} \tilde{\eta}_t \leq \sum_{t=p}^q \tilde{\eta}_t ( \Phi (\vx_t) - \Phi (\vx_{s^\star}) )$ yields
\begin{align*}
    \Phi (\vx_r) - \Phi (\vx_{s^\star})
    &\leq M_\nu^\frac{1}{1 + \nu} \frac{\sum_{t=s}^q \tilde{\eta}_t^2}{\sum_{t=p}^q \tilde{\eta}_t}.
\end{align*}
Since $\Phi(\vx_{s^\star}) \leq \Phi(\vx_s)$, this concludes the proof.
\end{proof}

\begin{lemma}
\label{lemma:upper_bound_of_phi}
Suppose that \cref{assumption:holder_smoothness,assumption:convex} hold, $T \geq 16$, and $\tilde{\eta}_t = ( 1 - \tfrac{t}{T+1} ) \frac{\| \vx_1 - \vx^\star\|}{\sqrt{T}}$. For any $j \in \{ 0, \dots, J \}$, there exists $s_j \in \mathcal{I}_j$ such that
\begin{align*}
    f (\vx_{s_j}) - f^\star &\leq 23104 M_\nu \left( \frac{\| \vx_1 - \vx^\star\|^2}{T} \right)^{\frac{1 + \nu}{2}}.
\end{align*}
\end{lemma}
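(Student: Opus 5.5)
The plan is to argue by induction on $j$, using \cref{lemma:difference_of_function_values} to pass from a good iterate in one block to a good iterate in the next block. Throughout, write $D \coloneqq \|\vx_1 - \vx^\star\|$ and $\Phi(\vx) = (f(\vx)-f^\star)^{\frac{1}{1+\nu}}$. The target is
\begin{align*}
\Phi(\vx_{s_j}) \leq C M_\nu^{\frac{1}{1+\nu}} \frac{D}{\sqrt{T}}.
\end{align*}
Raising this to the power $1+\nu$ gives the claim, since $C^{1+\nu} \leq C^2 \leq 23104$ for any $C \leq 152$.

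\textbf{Base case $j=0$.} Take $\vx^\star$ as the comparison point. Sum the identity
\begin{align*}
\|\vx_{t+1}-\vx^\star\|^2 = \|\vx_t - \vx^\star\|^2 - 2\tilde{\eta}_t \langle \nabla f(\vx_t)/\|\nabla f(\vx_t)\|, \vx_t - \vx^\star\rangle + \tilde{\eta}_t^2
\end{align*}
over $t \in \mathcal{I}_0$, bound each inner product from below by $\Phi(\vx_t)/(2M_\nu^{1/(1+\nu)})$ via \cref{lemma:difference_of_function_value}, and drop the final squared distance. This gives
\begin{align*}
\min_{t\in\mathcal{I}_0}\Phi(\vx_t) \leq M_\nu^{\frac{1}{1+\nu}} \frac{D^2 + \sum_{t\in\mathcal{I}_0}\tilde{\eta}_t^2}{\sum_{t\in\mathcal{I}_0}\tilde{\eta}_t}.
\end{align*}
On $\mathcal{I}_0$ we have $\tilde{\eta}_t \geq D/(2\sqrt{T})$ and $|\mathcal{I}_0| = \lfloor T/2\rfloor \geq T/3$ for $T\geq 16$. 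So the denominator is at least $D\sqrt{T}/6$, the numerator is at most $2D^2$, and the ratio is $O(D/\sqrt{T})$.

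\textbf{Induction step from $j-1$ to $j$.} Apply \cref{lemma:difference_of_function_values} with $s = s_{j-1}$, $p = \min\mathcal{I}_j$, $q = \max\mathcal{I}_j$. This yields $r = s_j \in \mathcal{I}_j$ with
\begin{align*}
\Phi(\vx_{s_j}) \leq \Phi(\vx_{s_{j-1}}) + M_\nu^{\frac{1}{1+\nu}} \frac{\sum_{t=s_{j-1}}^{q}\tilde{\eta}_t^2}{\sum_{t\in\mathcal{I}_j}\tilde{\eta}_t}.
\end{align*}
On $\mathcal{I}_{j-1}\cup\mathcal{I}_j$ we have $\tilde{\eta}_t \leq 2^{-(j-1)} D/\sqrt{T}$, and this union has at most about $3(T+1)2^{-(j+1)}+2$ elements. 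Hence the numerator is $O(D^2 2^{-3j}(T+1)/T)$. The denominator is at least $|\mathcal{I}_j|\, 2^{-(j+1)}D/\sqrt{T}$ with $|\mathcal{I}_j| \gtrsim (T+1)2^{-(j+1)}$. The ratio is therefore $O(2^{-j} D/\sqrt{T})$, a fixed fraction of the previous increment.

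\textbf{Main obstacle.} Naively the additive increments $c\,2^{-j}D/\sqrt{T}$ telescope to a geometric series, giving the bound $\Phi(\vx_{s_J}) \leq \Phi(\vx_{s_0}) + 2cD/\sqrt{T}$ with no logarithm. The real difficulty is the integer rounding of block sizes for large $j$, where $|\mathcal{I}_j|$ can be as small as about $\sqrt{T}/2$. I would use $(T+1)2^{-(j+1)} > \sqrt{T}/2 \geq 2$ (valid since $j \leq J$ and $T\geq 16$) to show $|\mathcal{I}_j| \geq \tfrac12 (T+1)2^{-(j+1)}$, and to bound the number of elements of the union by a constant multiple of $(T+1)2^{-j}$. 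This keeps the per-block constant $c$ absolute.

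Adding the base constant and the geometric-sum constant then gives $C \leq 152$. I expect this bookkeeping to be the only delicate part; the structural argument is simply the chaining above.
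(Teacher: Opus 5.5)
Your proposal is correct and follows the paper's proof essentially step for step. The base case sums the distance recursion toward $\vx^\star$ over $\mathcal{I}_0$ and lower-bounds the inner products via \cref{lemma:difference_of_function_value}. The induction step applies \cref{lemma:difference_of_function_values} with $s = s_{j-1}$, $p = \min \mathcal{I}_j$, $q = \max \mathcal{I}_j$, using $(T+1)2^{-(j+1)} \geq 2$ to control the block sizes so that the increments are $\mathcal{O}(2^{-j}) M_\nu^{\frac{1}{1+\nu}} \|\vx_1 - \vx^\star\|/\sqrt{T}$ and sum geometrically. Your bookkeeping gives roughly $12$ for the base case and $64$ for the geometric sum, below the paper's $24 + 128 = 152$, so the stated bound $152^2 = 23104$ follows.
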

\begin{proof}
For $t \in \mathcal{I}_j$, we have, by the definition of $\mathcal{I}_j$
\begin{align}
    \label{eq:regime_of_sum_of_stepsize}
    \frac{1}{2^{j+1}} \frac{\| \vx_1 - \vx^\star\|}{\sqrt{T}} &< \tilde{\eta}_t \leq \frac{1}{2^{j}} \frac{\| \vx_1 - \vx^\star\|}{\sqrt{T}}, \\
    \label{eq:regime_of_sum_of_squared_stepsize}
    \frac{T + 1}{2^{j+1}} -1 &\leq | \mathcal{I}_j | \leq \frac{T + 1}{2^{j+1}} + 1.
\end{align}

From \cref{lemma:difference_of_function_value}, we obtain
\begin{align*}
    \sum_{t \in \mathcal{I}_0} \tilde{\eta}_t \Phi (\vx_t)
    &\leq 2 M_\nu^\frac{1}{1 + \nu} \sum_{t \in \mathcal{I}_0}  \tilde{\eta}_t \left\langle \frac{\nabla f (\vx_t)}{\| \nabla f (\vx_t)\|}, \vx_t - \vx^\star \right\rangle \\
    &\leq 2 M_\nu^\frac{1}{1 + \nu} \left(  \| \vx_1 - \vx^\star\|^2 + \sum_{t \in \mathcal{I}_0} \tilde{\eta}_t^2 \right),
\end{align*}
where we use the following in the last inequality:
\begin{align*}
    \left\| \vx_{t+1} - \vx^\star \right\|^2
    \leq \| \vx_t - \vx^\star \|^2 - 2 \tilde{\eta}_t \left\langle \frac{\nabla f (\vx_t)}{\| \nabla f (\vx_t) \|}, \vx_t - \vx^\star \right\rangle + \tilde{\eta}_t^2.
\end{align*}
Since $\mathcal{I}_0 = \{1, \dots, \lfloor T/2 \rfloor \}$, a direct computation gives
\begin{align*}
    \sum_{t \in \mathcal{I}_0} \tilde{\eta}_t
    = \frac{\| \vx_1 - \vx^\star\|}{\sqrt{T}} \sum_{t=1}^{\lfloor T/2 \rfloor} \left( 1 - \frac{t}{T+1} \right)
    \geq \frac{3 (T - 1)}{8} \cdot \frac{\| \vx_1 - \vx^\star\|}{\sqrt{T}}
    \geq \| \vx_1 - \vx^\star\| \frac{\sqrt{T+1}}{4},
\end{align*}
where the last inequality holds for $T \geq 4$ (it is equivalent to $5 T^2 - 22 T + 9 \geq 0$), and \cref{eq:regime_of_sum_of_squared_stepsize,eq:regime_of_sum_of_stepsize} give
\begin{align*}
    \sum_{t \in \mathcal{I}_0} \tilde{\eta}^2_t \leq \left( \frac{T+1}{2} + 1 \right) \frac{\| \vx_1 - \vx^\star\|^2}{T} \leq 2 \| \vx_1 - \vx^\star\|^2.
\end{align*}
Since $\min_{t \in \mathcal{I}_0} \Phi(\vx_t) \sum_{t \in \mathcal{I}_0} \tilde{\eta}_t \leq \sum_{t \in \mathcal{I}_0} \tilde{\eta}_t \Phi(\vx_t) \leq 6 M_\nu^{\frac{1}{1+\nu}} \| \vx_1 - \vx^\star \|^2$, we conclude that there exists $s_0 \in \mathcal{I}_0$ such that
\begin{align}
\label{eq:initial_upper_bound_of_function_value}
    \Phi (\vx_{s_0}) \leq 24 M_\nu^\frac{1}{1 + \nu} \frac{\| \vx_1 - \vx^\star \|}{\sqrt{T}}.
\end{align}

Next, we discuss the case when $j \geq 1$.
Choose $s_j$ as follows for any $j$:
\begin{align*}
    s_j \coloneqq \argmin_{t \in \mathcal{I}_j} \Phi (\vx_t).
\end{align*}
From \cref{lemma:difference_of_function_values}, it holds for any $j \geq 1$
\begin{align*}
    \Phi (\vx_{s_j}) \leq \Phi (\vx_{s_{j-1}}) + M_\nu^\frac{1}{1 + \nu} \frac{\sum_{t \in \mathcal{I}_{j-1} \cup \mathcal{I}_{j}} \tilde{\eta}_t^2}{\sum_{t \in \mathcal{I}_{j}} \tilde{\eta}_t}.
\end{align*}
Using \cref{eq:regime_of_sum_of_stepsize,eq:regime_of_sum_of_squared_stepsize}, $T \geq 16$, and $2^{j} \leq 2^{J} < \frac{T+1}{\sqrt{T}}$, we obtain
\begin{align*}
    \sum_{t \in \mathcal{I}_{j-1} \cup \mathcal{I}_{j}} \tilde{\eta}_t^2
    &\leq \frac{\| \vx_1 - \vx^\star\|^2}{T} \left[ \left( \frac{T+1}{2^{j}} + 1 \right) \frac{1}{4^{j-1}} + \left( \frac{T+1}{2^{j+1}} + 1 \right) \frac{1}{4^{j}} \right] \\
    &= \frac{\| \vx_1 - \vx^\star\|^2}{8^{j}} \left[ \frac{9 (T+1)}{2 T} + \frac{5 \cdot 2^{j}}{T} \right]
    \leq \frac{\| \vx_1 - \vx^\star\|^2}{8^{j}} \cdot \frac{391}{64}
    \leq \| \vx_1 - \vx^\star\|^2 \frac{1}{2^{3j-4}}, \\
    \sum_{t \in \mathcal{I}_{j}} \tilde{\eta}_t
    &\geq \left( \frac{T+1}{2^{j+1}} - 1 \right) \frac{\| \vx_1 - \vx^\star\|}{2^{j+1} \sqrt{T}}
    \geq \frac{(T+1) \| \vx_1 - \vx^\star\|}{2^{2j+3} \sqrt{T}}
    \geq \| \vx_1 - \vx^\star\| \frac{\sqrt{T+1}}{2^{2j+3}},
\end{align*}
where, in the second line, we use $\frac{T+1}{2^{j+1}} - 1 \geq \frac{T+1}{2^{j+2}}$, which holds because $2^{j+2} \leq 2^{J+2} < \frac{4 (T+1)}{\sqrt{T}} \leq T + 1$ for $T \geq 16$. 
Therefore, we get
\begin{align*}
    \Phi (\vx_{s_j}) \leq \Phi (\vx_{s_{j-1}}) + M_\nu^\frac{1}{1 + \nu} \frac{\| \vx_1 - \vx^\star\|}{\sqrt{T+1}} \frac{1}{2^{j - 7}}.
\end{align*}
Unrolling this recursion, using \cref{eq:initial_upper_bound_of_function_value} and $\sum_{i=1}^{j} 2^{7 - i} \leq 2^{7}$, we obtain $\Phi(\vx_{s_j}) \leq (24 + 128) M_\nu^{\frac{1}{1+\nu}} \frac{\| \vx_1 - \vx^\star\|}{\sqrt{T}} = 152 M_\nu^{\frac{1}{1+\nu}} \frac{\| \vx_1 - \vx^\star\|}{\sqrt{T}}$ for all $j \in \{0, \dots, J\}$.
Raising both sides to the power $1 + \nu \leq 2$ gives the statement.
\end{proof}

\begin{lemma}
\label{lemma:upper_bound_of_e}
Suppose that \cref{assumption:holder_smoothness,assumption:convex} hold, $T \geq 16$, and $\tilde{\eta}_t = ( 1 - \tfrac{t}{T+1} ) \frac{\| \vx_1 - \vx^\star\|}{\sqrt{T}}$. We define
\begin{align*}
    \mathcal{E} (r) &\coloneqq \sum_{t=r}^T \tilde{\eta}_t \| \nabla f (\vx_t) \|, \\
    s_j &\coloneqq \argmin_{t \in \mathcal{I}_j} f (\vx_t).
\end{align*}
We have
\begin{align*}
    \begin{cases}
        \mathcal{E} (1) &\leq 2 M_\nu \| \vx_1 - \vx^\star \|^{1 + \nu} (T + 1)^{\frac{1 - \nu}{2}}, \\
        \mathcal{E} (s_{j-1}) &\leq M_\nu \left( \frac{\| \vx_1 - \vx^\star\|^2}{T} \right)^{\frac{1 + \nu}{2}} \left( 23104 + \frac{T + 1}{2^{(j-1) (2 + \nu)}} \right) \quad \text{for all $j \in \{ 1, \dots, J \}$}, \\
        \mathcal{E} (s_J) &\leq 23112 M_\nu \left( \frac{\| \vx_1 - \vx^\star\|^2}{T} \right)^{\frac{1 + \nu}{2}}.
    \end{cases}
\end{align*}
\end{lemma}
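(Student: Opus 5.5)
The plan is to handle all three estimates the same way. First use \cref{lemma:bound_of_grad_norm} in the form $\|\nabla f(\vx_t)\| \le 2 M_\nu^{\frac{1}{1+\nu}} \Phi(\vx_t)^{\nu}$, so that
\[
\mathcal{E}(r) \le 2 M_\nu^{\frac{1}{1+\nu}} \sum_{t=r}^T \tilde{\eta}_t \Phi(\vx_t)^{\nu}.
\]
Then apply H\"older's inequality with exponents $\frac{1}{1-\nu}$ and $\frac{1}{\nu}$ to the weights $\tilde{\eta}_t$:
\[
\sum_{t=r}^T \tilde{\eta}_t \Phi(\vx_t)^{\nu} \le \Bigl(\sum_{t=r}^T \tilde{\eta}_t\Bigr)^{1-\nu} \Bigl(\sum_{t=r}^T \tilde{\eta}_t \Phi(\vx_t)\Bigr)^{\nu}.
\]
This reduces everything to bounding $\sum_{t \ge r} \tilde{\eta}_t$, which is an explicit computation for the linear schedule, and the weighted sum $\sum_{t \ge r} \tilde{\eta}_t \Phi(\vx_t)$. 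For the latter, \cref{lemma:difference_of_function_value} combined with the distance-telescoping identity of normalized gradient descent gives an "online-regret" bound.

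For $\mathcal{E}(1)$, I would reuse the argument from the proof of \cref{lemma:upper_bound_of_phi}, run over all $t \in \{1,\dots,T\}$ rather than only over $\mathcal{I}_0$. With $\vy = \vx^\star$ and telescoping $\|\vx_{t+1}-\vx^\star\|^2$, this gives
\[
\sum_{t=1}^T \tilde{\eta}_t \Phi(\vx_t) \le M_\nu^{\frac{1}{1+\nu}}\Bigl(\|\vx_1-\vx^\star\|^2 + \sum_t \tilde{\eta}_t^2\Bigr) \le 3 M_\nu^{\frac{1}{1+\nu}} \|\vx_1-\vx^\star\|^2 .
\]
We also have $\sum_t \tilde{\eta}_t \le \frac{T}{2}\cdot\frac{\|\vx_1-\vx^\star\|}{\sqrt{T}} \le \frac{\sqrt{T+1}}{2}\|\vx_1-\vx^\star\|$. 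Plugging both into the H\"older bound gives $M_\nu \|\vx_1-\vx^\star\|^{1+\nu}(T+1)^{\frac{1-\nu}{2}}$ up to a small constant. The powers of $M_\nu$ combine as $M_\nu^{\frac{1}{1+\nu}} M_\nu^{\frac{\nu}{1+\nu}} = M_\nu$, so only the numerical constant has to be tracked.

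For $\mathcal{E}(s_{j-1})$ and $\mathcal{E}(s_J)$, I would replace the reference point $\vx^\star$ by $\vx_{s}$ with $s = s_{j-1}$ (resp.\ $s_J$), in the same way as in the proof of \cref{lemma:difference_of_function_values}. Write $\Phi(\vx_t) \le \Phi(\vx_s) + (\Phi(\vx_t)-\Phi(\vx_s))_+$. For indices with $f(\vx_t) \ge f(\vx_s)$, \cref{lemma:difference_of_function_value} bounds the positive part by $2M_\nu^{\frac{1}{1+\nu}}\langle \nabla f(\vx_t)/\|\nabla f(\vx_t)\|, \vx_t-\vx_s\rangle$. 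The other indices contribute zero. Telescoping $\|\vx_{t+1}-\vx_s\|^2$ from $t=s$ to $T$ then yields
\[
\sum_{t \ge s}\tilde{\eta}_t(\Phi(\vx_t)-\Phi(\vx_s))_+ \le M_\nu^{\frac{1}{1+\nu}} \sum_{t\ge s}\tilde{\eta}_t^2 .
\]
Here I must check that the discarded inner products for indices with $f(\vx_t)<f(\vx_s)$ have a harmless sign. If they do not, I will restrict the telescoping as in \cref{lemma:difference_of_function_values}, using that $s$ minimizes $f$ on its block. The tail sums are then computed from $s \in \mathcal{I}_{j-1}$, where the remaining horizon is at most $(T+1)2^{-(j-1)}$ steps with stepsizes at most $2^{-(j-1)}\|\vx_1-\vx^\star\|/\sqrt{T}$. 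This gives
\[
\sum_{t\ge s}\tilde{\eta}_t \lesssim \frac{(T+1)\|\vx_1-\vx^\star\|}{\sqrt{T}\,4^{j-1}}, \qquad \sum_{t \ge s}\tilde{\eta}_t^2 \lesssim \frac{(T+1)\|\vx_1-\vx^\star\|^2}{T\,8^{j-1}} .
\]
The regret part then contributes $M_\nu(\|\vx_1-\vx^\star\|^2/T)^{\frac{1+\nu}{2}} (T+1)\, 4^{-(j-1)(1-\nu)} 8^{-(j-1)\nu} = M_\nu(\cdot)^{\frac{1+\nu}{2}}\frac{T+1}{2^{(j-1)(2+\nu)}}$, exactly the second term claimed. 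For $s_J$ the remaining horizon is $O(\sqrt{T})$ with stepsizes $O(\|\vx_1-\vx^\star\|/\sqrt{T}\cdot\sqrt{T}/(T+1))$, and both pieces collapse to an absolute constant times $M_\nu(\|\vx_1-\vx^\star\|^2/T)^{\frac{1+\nu}{2}}$.

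The main obstacle is the contribution of $\Phi(\vx_s)$ itself. \Cref{lemma:upper_bound_of_phi} gives $\Phi(\vx_s) \le 152 M_\nu^{\frac{1}{1+\nu}}\|\vx_1-\vx^\star\|/\sqrt{T}$, but the naive product $\Phi(\vx_s)^\nu \sum_{t\ge s}\tilde{\eta}_t$ carries an extra factor $(T+1)/4^{j-1}$ rather than an absolute constant. To get the claimed form $23104 = 152^2$, I would avoid splitting $\Phi(\vx_t)$ linearly before applying H\"older. Instead I would apply H\"older to $\bigl(\sum \tilde{\eta}_t\bigr)^{1-\nu}\bigl(\Phi(\vx_s)\sum\tilde{\eta}_t + M_\nu^{\frac{1}{1+\nu}}\sum\tilde{\eta}_t^2\bigr)^{\nu}$ and use Young's inequality to separate the two summands, charging the $\Phi(\vx_s)$-part against $(152)^{1+\nu} \le 23104$ times the target rate. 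If that still leaves a stray $(T+1)/4^{j-1}$ factor, I would absorb it into the $\frac{T+1}{2^{(j-1)(2+\nu)}}$ term, or sharpen the estimate by choosing $s$ closer to the end of its block. I expect this bookkeeping of the $\Phi(\vx_s)$-term to be the delicate part.
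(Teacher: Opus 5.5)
Your route for $\mathcal{E}(1)$ (H\"older's inequality plus the regret bound against $\vx^\star$) is sound. As written, though, your constants give $6^\nu$ instead of $2$. To reach $2$ you need the sharp factor $(\frac{1+\nu}{\nu})^{\frac{\nu}{1+\nu}}$ in both \cref{lemma:bound_of_grad_norm} and \cref{lemma:difference_of_function_value}, together with $\sum_{t}\tilde{\eta}_t^2\le\|\vx_1-\vx^\star\|^2/3$.

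The genuine gap is in $\mathcal{E}(s_{j-1})$. It is exactly the obstacle you flag, and it is not a matter of bookkeeping. Write $S=\sum_{t\ge s}\tilde{\eta}_t$ and $\rho_T=M_\nu(\|\vx_1-\vx^\star\|^2/T)^{\frac{1+\nu}{2}}$. Your decomposition $\Phi(\vx_t)\le\Phi(\vx_s)+(\Phi(\vx_t)-\Phi(\vx_s))_+$ followed by H\"older gives a bound that is at least $2M_\nu^{\frac{1}{1+\nu}}\Phi(\vx_s)^\nu S$. With the only available estimate, $\Phi(\vx_s)\le 152M_\nu^{\frac{1}{1+\nu}}\|\vx_1-\vx^\star\|/\sqrt{T}$, this is of order $\rho_T(T+1)/4^{j-1}$. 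For $\nu>0$ and, e.g., $2^j\approx T^{1/4}$, that is of order $\sqrt{T}\,\rho_T$, while the lemma allows only $\rho_T\bigl(23104+O(T^{\frac12-\frac{\nu}{4}})\bigr)$. None of your proposed fixes removes this excess:
\begin{itemize}
\item Young's inequality cannot help, since there is no self-bounding inequality to exploit.
\item $s_{j-1}$ is fixed as the block minimizer, so it cannot be moved.
\item The excess exceeds $(T+1)/2^{(j-1)(2+\nu)}$ by the factor $2^{(j-1)\nu}$, so it cannot be absorbed there.
\end{itemize}
Independently, your positive-part regret bound is not established. For indices with $f(\vx_t)<f(\vx_s)$ the inner product $\langle\nabla f(\vx_t),\vx_t-\vx_s\rangle$ may be negative, so these terms cannot be dropped from the telescoped sum. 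The device of \cref{lemma:difference_of_function_values} only produces one good index, not a bound on a weighted sum over all $t\ge s$.

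The missing idea is to avoid bounding gradient norms by function values pointwise altogether. Use instead the descent step of \cref{lemma:holder_smooth} with $\vy=\vx_{t+1}$: $f(\vx_{t+1})\le f(\vx_t)-\tilde{\eta}_t\|\nabla f(\vx_t)\|+M_\nu\tilde{\eta}_t^{1+\nu}$. Summing from $t=s$ to $T$ gives, for every $s$, $\mathcal{E}(s)\le f(\vx_s)-f^\star+M_\nu\sum_{t=s}^T\tilde{\eta}_t^{1+\nu}$. A large gradient is thus paid for by an actual decrease of $f$, which is precisely the information a pointwise bound discards. The three claims then reduce to counting:
\begin{itemize}
\item For $s=1$, use $f(\vx_1)-f^\star\le M_\nu\|\vx_1-\vx^\star\|^{1+\nu}$ and $\sum_{t=1}^T\tilde{\eta}_t^{1+\nu}\le\|\vx_1-\vx^\star\|^{1+\nu}T^{\frac{1-\nu}{2}}$.
\item For $s=s_{j-1}$, \cref{lemma:upper_bound_of_phi} gives $f(\vx_{s_{j-1}})-f^\star\le 152^{1+\nu}\rho_T\le 23104\,\rho_T$. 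The tail has at most $(T+1)2^{-(j-1)}$ terms, each at most $(2^{-(j-1)}\|\vx_1-\vx^\star\|/\sqrt{T})^{1+\nu}$, which gives exactly $\rho_T(T+1)/2^{(j-1)(2+\nu)}$.
\item For $s=s_J$, the tail has at most $2\sqrt{T}$ terms, each at most $(2\|\vx_1-\vx^\star\|/(T+1))^{1+\nu}$, contributing at most $8\rho_T$.
\end{itemize}
So the constant $23104=152^2$ bounds $f(\vx_s)-f^\star$ itself, not $\Phi(\vx_s)^\nu$ times a sum of stepsizes.

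A weaker version with $(T+1)/4^{j-1}$ in place of $(T+1)/2^{(j-1)(2+\nu)}$ would still suffice for \cref{theorem:convergenec_rate_of_last_with_adaptive_stepsize}, since $\sum_j 2^{j+3}/4^{j-1}=O(1)$. However, your route reaches even that only after the regret step is repaired, and it does not prove the lemma as stated.
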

\begin{proof}
Note that $s_j$ is the choice made in the proof of \cref{lemma:upper_bound_of_phi}, so the bound of that lemma applies to $s_j$ for every $j \in \{0, \dots, J\}$. 
From \cref{lemma:holder_smooth} (using $\frac{M_\nu}{1+\nu} \leq M_\nu$), we have
\begin{align*}
    f (\vx_{t+1})
    &\leq f (\vx_t) + \langle \nabla f (\vx_t), \vx_{t+1} - \vx_t \rangle + M_\nu \tilde{\eta}_t^{1 + \nu}  \\
    &\leq f (\vx_t) - \tilde{\eta}_t \| \nabla f (\vx_t) \| + M_\nu \tilde{\eta}_t^{1 + \nu}.
\end{align*}
Summing up the above inequalities from $t=s$ to $T$ yields
\begin{align*}
    \mathcal{E} (s)
    = \sum_{t=s}^T \tilde{\eta}_t \| \nabla f (\vx_t) \|
    &\leq f (\vx_s) - f^\star + M_\nu \sum_{t=s}^T \tilde{\eta}_t^{1 + \nu}.
\end{align*}

We first consider the case when $s=1$.
\Cref{assumption:holder_smoothness,lemma:holder_smooth} imply that
\begin{align*}
    \mathcal{E} (1)
    &\leq M_\nu \| \vx_1 - \vx^\star \|^{1 + \nu} + M_\nu \sum_{t=1}^T \tilde{\eta}_t^{1 + \nu} \\
    &\leq M_\nu \| \vx_1 - \vx^\star \|^{1 + \nu} + M_\nu \| \vx_1 - \vx^\star \|^{1 + \nu} (T + 1)^{\frac{1 - \nu}{2}} \\
    &\leq 2 M_\nu \| \vx_1 - \vx^\star \|^{1 + \nu} (T + 1)^{\frac{1 - \nu}{2}}.
\end{align*}

Next, we consider the case when $j \in \{1, \dots, J \}$.
From \cref{lemma:upper_bound_of_phi}, we obtain
\begin{align*}
    \mathcal{E} (s_{j-1})
    &\leq  23104 M_\nu \left( \frac{\| \vx_1 - \vx^\star\|^2}{T} \right)^{\frac{1 + \nu}{2}} + M_\nu \sum_{t=s_{j-1}}^T \tilde{\eta}_t^{1 + \nu} \\
    &\leq  M_\nu \left( \frac{\| \vx_1 - \vx^\star\|^2}{T} \right)^{\frac{1 + \nu}{2}} \left( 23104 + \frac{T + 1}{2^{(j-1) (2 + \nu)}} \right),
\end{align*}
where we use that $\tilde{\eta}_t \leq 2^{-(j-1)} \frac{\| \vx_1 - \vx^\star\|}{\sqrt{T}}$ for all $t \geq s_{j-1}$ and that the number of such $t$ is $T + 1 - s_{j-1} \leq (T+1) 2^{-(j-1)}$, both by the definition of $s_{j-1} \in \mathcal{I}_{j-1}$.

Finally, we have
\begin{align*}
    \mathcal{E} (s_{J})
    &\leq  23104 M_\nu \left( \frac{\| \vx_1 - \vx^\star\|^2}{T} \right)^{\frac{1 + \nu}{2}} + M_\nu \sum_{t=s_J}^T \tilde{\eta}_t^{1 + \nu} \\
    &\leq  23112 M_\nu \left( \frac{\| \vx_1 - \vx^\star\|^2}{T} \right)^{\frac{1 + \nu}{2}},
\end{align*}
where we use $T - s_J + 1 \leq (T+1) 2^{-J} \leq 2 \sqrt{T}$ and $\tilde{\eta}_t \leq 2^{-J} \frac{\| \vx_1 - \vx^\star\|}{\sqrt{T}} \leq \frac{2 \| \vx_1 - \vx^\star\|}{T+1}$ for $t \geq s_J$, so that $\sum_{t=s_J}^T \tilde{\eta}_t^{1 + \nu} \leq 2 \sqrt{T} \bigl( \frac{2 \| \vx_1 - \vx^\star\|}{T + 1} \bigr)^{1+\nu} \leq 8 \bigl( \frac{\| \vx_1 - \vx^\star\|^2}{T} \bigr)^{\frac{1+\nu}{2}}$, in the last inequality.
\end{proof}

\begin{lemma}
\label{lemma:distance}
Suppose that \cref{assumption:convex,assumption:holder_smoothness} hold.
Normalized gradient descent with $\tilde{\eta}_t = (1 - \tfrac{t}{T+1}) \tfrac{\| \vx_1 - \vx^\star\|}{\sqrt{T}}$ satisfies
\begin{align*}
    \left\| \vx_{t} - \vx^\star \right\|^2
    &\leq 2 \left\| \vx_1 - \vx^\star \right\|^2,
\end{align*}
for any $t \in \{1, \dots, T+1\}$.
\end{lemma}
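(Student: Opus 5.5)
We prove the bound by unrolling the one-step distance recursion of normalized gradient descent with respect to $\vx^\star$. The update rule gives, for every $t$,
\begin{align*}
    \| \vx_{t+1} - \vx^\star \|^2
    = \| \vx_t - \vx^\star \|^2 - 2 \tilde{\eta}_t \left\langle \frac{\nabla f (\vx_t)}{\| \nabla f (\vx_t) \|}, \vx_t - \vx^\star \right\rangle + \tilde{\eta}_t^2 .
\end{align*}
By \cref{assumption:convex}, $\langle \nabla f(\vx_t), \vx_t - \vx^\star \rangle \geq f(\vx_t) - f^\star \geq 0$. Since $\tilde{\eta}_t > 0$, the middle term is therefore nonpositive and can be dropped, which yields $\| \vx_{t+1} - \vx^\star \|^2 \leq \| \vx_t - \vx^\star \|^2 + \tilde{\eta}_t^2$.

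Telescoping this inequality from $1$ to $t-1$ gives, for every $t \in \{1, \dots, T+1\}$,
\begin{align*}
    \| \vx_t - \vx^\star \|^2 \leq \| \vx_1 - \vx^\star \|^2 + \sum_{s=1}^{T} \tilde{\eta}_s^2 .
\end{align*}
It remains to check that the sum of squared stepsizes is at most $\| \vx_1 - \vx^\star \|^2$. Using $0 < 1 - \frac{s}{T+1} \leq 1$ for $1 \leq s \leq T$, each term satisfies $\tilde{\eta}_s^2 \leq \frac{\| \vx_1 - \vx^\star \|^2}{T}$. Summing the $T$ terms gives $\sum_{s=1}^T \tilde{\eta}_s^2 \leq \| \vx_1 - \vx^\star \|^2$. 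A sharper evaluation, namely $\frac{\| \vx_1 - \vx^\star\|^2}{T(T+1)^2} \sum_{k=1}^{T} k^2 \leq \frac{\| \vx_1 - \vx^\star\|^2}{3}\cdot\frac{2T+1}{T+1}$, is not needed. Combining the two displays gives the claim $\| \vx_t - \vx^\star \|^2 \leq 2 \| \vx_1 - \vx^\star \|^2$.

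There is no real obstacle here. The one point requiring care is to justify dropping the inner-product term, which relies only on convexity and the optimality of $\vx^\star$; H\"older smoothness is not needed for this lemma. For $\nu = 0$, where $\nabla f$ denotes a subgradient, the same subgradient inequality applies. The case $t = T+1$ is covered because the telescoped sum runs over at most the $T$ stepsizes $\tilde{\eta}_1, \dots, \tilde{\eta}_T$.
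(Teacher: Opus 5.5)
Your proof is correct and follows the paper's argument: drop the inner-product term via convexity to get $\|\vx_{t+1}-\vx^\star\|^2 \leq \|\vx_t-\vx^\star\|^2+\tilde{\eta}_t^2$, telescope, and bound each $\tilde{\eta}_s^2$ by $\|\vx_1-\vx^\star\|^2/T$ over at most $T$ terms. Your unneeded ``sharper evaluation'' is loose by a factor of $2$ (the exact sum is $\frac{2T+1}{6(T+1)}\|\vx_1-\vx^\star\|^2$), but as an upper bound it is still valid.
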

\begin{proof}
From the update rule of normalized gradient descent, we have
\begin{align*}
    \left\| \vx_{t+1} - \vx^\star \right\|^2
    &\leq \| \vx_t - \vx^\star \|^2 - 2 \tilde{\eta}_t \left\langle \frac{\nabla f (\vx_t)}{\| \nabla f (\vx_t) \|}, \vx_t - \vx^\star \right\rangle + \tilde{\eta}_t^2 \\
    &\leq \| \vx_t - \vx^\star \|^2 + \tilde{\eta}_t^2,
\end{align*}
where we use \cref{assumption:convex} in the second inequality.
Recursively applying the above inequality, we obtain
\begin{align*}
    \left\| \vx_{t+1} - \vx^\star \right\|^2
    &\leq \left\| \vx_1 - \vx^\star \right\|^2 + \sum_{s=1}^t \tilde{\eta}^2_s \\
    &\leq 2 \left\| \vx_1 - \vx^\star \right\|^2,
\end{align*}
where the last inequality uses $\tilde{\eta}_s \leq \frac{\| \vx_1 - \vx^\star\|}{\sqrt{T}}$ and $t \leq T$.
\end{proof}

\begin{proof}[Proof of \cref{theorem:convergenec_rate_of_last_with_adaptive_stepsize}]
We show that
\begin{align*}
    f (\vx_T) - f (\vx^\star) \leq 277384 \, M_\nu \bigl( \frac{\| \vx_1 - \vx^\star\|^2}{T} \bigr)^{\frac{1 + \nu}{2}},
\end{align*}
for every $T \geq 1$; we have not attempted to optimize this constant.
If $T < 16$, then \cref{lemma:holder_smooth} with $\vx = \vx^\star$ and \cref{lemma:distance} give $f (\vx_T) - f (\vx^\star) \leq \frac{M_\nu}{1+\nu} \| \vx_T - \vx^\star \|^{1+\nu} \leq M_\nu \bigl( 2 \| \vx_1 - \vx^\star \|^2 \bigr)^{\frac{1+\nu}{2}} \leq 32 M_\nu \bigl( \frac{\| \vx_1 - \vx^\star\|^2}{T} \bigr)^{\frac{1 + \nu}{2}}$, where we use $T < 16$ in the last step.
We now consider the case $T \geq 16$.

Let $\{ v_t \}_{t=0}^T$ be defined as follows:
\begin{align*}
    v_T &\coloneqq \frac{1}{\eta_T}, \\
    v_t &\coloneqq \frac{1}{\sum_{s=t+1}^T \eta_s}, \quad (\text{for all $t = 0, 1, \dots, T-1$}).
\end{align*}
As in the proof of \cref{lemma:average_rate_with_constant}, $\{ v_t \}_{t=0}^T$ is positive and nondecreasing,
\begin{align*}
    \eta_T v_{T - 1} = \eta_T v_T &= 1,
\end{align*}
and for any $t=1, 2, \dots, T-1$, we have
\begin{align*}
    \eta_t v_t - \left( \sum_{s=t}^T \eta_s \right) (v_t - v_{t-1})
    &=  - \left( \sum_{s=t+1}^T \eta_s \right) v_t + \left( \sum_{s=t}^T \eta_s \right) v_{t-1}
    = 0.
\end{align*}
Thus, using Lemma~\ref{lemma:zamani}, we obtain
\begin{align*}
    2 \left( f (\vx_T) - f (\vx^\star) \right)
    &\leq v_{0} \| \vx_1 - \vx^\star \|^2 + \sum_{t=1}^T v_t \tilde{\eta}_t^2.
\end{align*}
From the Cauchy–Schwarz inequality, it holds that for any $t \in \{0, 1, \dots, T-1\}$,
\begin{align}
\label{eq:application_of_cauchy_schwarz_inequality2}
    \left( \sum_{s=t+1}^T \frac{\tilde{\eta}_s}{\| \nabla f (\vx_s) \|} \right) \left( \sum_{s=t+1}^T \tilde{\eta}_s \| \nabla f (\vx_s) \| \right)
    \geq \left( \sum_{s=t+1}^T \tilde{\eta}_s \right)^2
    = \tilde{\eta}_t^2 \frac{(T - t)^2}{4}.
\end{align}
This implies that
\begin{align*}
    v_t \leq \frac{4 \sum_{s=t+1}^T \tilde{\eta}_s \| \nabla f (\vx_s) \|}{\tilde{\eta}_t^2 (T - t)^2}.
\end{align*}
In particular, we get
\begin{align*}
    v_0 \leq \frac{4 \sum_{s=1}^T \tilde{\eta}_s \| \nabla f (\vx_s) \|}{\tilde{\eta}_0^2 T^2}
    = \frac{4 \mathcal{E}(1)}{\| \vx_1 - \vx^\star\|^2 T}
    &\leq 16 M_\nu \| \vx_1 - \vx^\star\|^{-1 + \nu} \left( \frac{1}{T} \right)^{\frac{1 + \nu}{2}},
\end{align*}
where we use \cref{lemma:upper_bound_of_e} and $(T+1)^{\frac{1-\nu}{2}} \leq 2 T^{\frac{1-\nu}{2}}$.
Since $\mathcal{I}_0, \dots, \mathcal{I}_J$ partition $\{1, \dots, \max \mathcal{I}_J\}$ and $\max \mathcal{I}_J \leq T - 2$, we can split the sum over $t$ as follows:
\begin{align*}
    &2 \left( f (\vx_T) - f (\vx^\star) \right) \\
    &\leq 16 M_\nu \left( \frac{\| \vx_1 - \vx^\star\|^2}{T} \right)^{\frac{1 + \nu}{2}} + v_T \tilde{\eta}_T^2 + \sum_{t=1}^{T - 1} \frac{4}{(T - t)^2} \sum_{s=t+1}^T \tilde{\eta}_s \| \nabla f (\vx_s) \| \\
    &= 16 M_\nu \left( \frac{\| \vx_1 - \vx^\star\|^2}{T} \right)^{\frac{1 + \nu}{2}} + v_T \tilde{\eta}_T^2 \\
    &\quad + \sum_{j=0}^J \sum_{t \in \mathcal{I}_j \land t < T} \frac{4}{(T - t)^2} \sum_{s=t+1}^T \tilde{\eta}_s \| \nabla f (\vx_s) \|
    + \sum_{t = 1 + \max \mathcal{I}_J}^{T-1} \frac{4}{(T - t)^2} \sum_{s = t+1}^T \tilde{\eta}_s \| \nabla f (\vx_s) \|.
\end{align*}

Moreover, we have, for every $j \in \{0, \dots, J\}$,
\begin{align}
\label{eq:upper_bound_of_sum}
    \sum_{t \in \mathcal{I}_j \land t < T} \frac{1}{(T - t)^2}
    \leq \frac{2^{j+3}}{T + 1}.
\end{align}
Indeed, for $t \in \mathcal{I}_j$ we have $T + 1 - t > (T+1) 2^{-(j+1)}$ and hence, since $T + 1 - t$ is an integer, $T - t \geq \lfloor (T+1) 2^{-(j+1)} \rfloor \geq (T+1) 2^{-(j+2)}$, where the last inequality uses $(T+1) 2^{-(j+1)} \geq (T+1) 2^{-(J+1)} \geq 2$; moreover, $\sum_{1 \geq u \geq m} \frac{1}{u^2} \leq \frac{2}{m}$ for any integer $m \geq 1$.

Using \cref{lemma:upper_bound_of_e}, $\sum_{s=t+1}^{T} \tilde{\eta}_s \| \nabla f(\vx_s) \| = \mathcal{E}(t+1) \leq \mathcal{E}(1)$, and \cref{eq:upper_bound_of_sum}, we get
\begin{align*}
    \sum_{t \in \mathcal{I}_0} \frac{4}{(T - t)^2} \sum_{s=t+1}^T \tilde{\eta}_s \| \nabla f (\vx_s) \|
    &\leq 8 M_\nu \| \vx_1 - \vx^\star\|^{1 + \nu} (T + 1)^{\frac{1 - \nu}{2}} \sum_{t \in \mathcal{I}_0} \frac{1}{(T - t)^2} \\
    &\leq 64 M_\nu \left( \frac{\| \vx_1 - \vx^\star\|^2}{T + 1} \right)^{\frac{1 + \nu}{2}}
    \leq 64 M_\nu \left( \frac{\| \vx_1 - \vx^\star\|^2}{T} \right)^{\frac{1 + \nu}{2}}.
\end{align*}
Similarly, since $s_{j-1} \in \mathcal{I}_{j-1}$ implies $s_{j-1} \leq t$ for every $t \in \mathcal{I}_j$, we have $\mathcal{E}(t+1) \leq \mathcal{E}(s_{j-1})$ for $t \in \mathcal{I}_j$, and hence
\begin{align*}
    &\sum_{j=1}^{J} \sum_{t \in \mathcal{I}_j \land t < T} \frac{4}{(T - t)^2} \sum_{s=t+1}^T \tilde{\eta}_s \| \nabla f (\vx_s) \| \\
    &\leq \sum_{j=1}^{J} \sum_{t \in \mathcal{I}_j \land t < T} \frac{4}{(T - t)^2} \sum_{s=s_{j-1}}^T \tilde{\eta}_s \| \nabla f (\vx_s) \| \\
    &\leq M_\nu \left( \frac{\| \vx_1 - \vx^\star\|^2}{T} \right)^{\frac{1 + \nu}{2}} \sum_{j=1}^{J} \sum_{t \in \mathcal{I}_j \land t < T} \frac{4}{(T - t)^2}  \left( 23104 + \frac{T + 1}{2^{(j-1) (2 + \nu)}} \right) \\
    &\leq M_\nu \left( \frac{\| \vx_1 - \vx^\star\|^2}{T} \right)^{\frac{1 + \nu}{2}} \sum_{j=1}^{J}  \left( 92416 \frac{ 2^{j+3}}{T+1} + \frac{64}{2^{(j-1) (1 + \nu)}} \right) \\
    &\leq 369792 M_\nu \left( \frac{\| \vx_1 - \vx^\star\|^2}{T} \right)^{\frac{1 + \nu}{2}},
\end{align*}
where we use \cref{lemma:upper_bound_of_e} and \cref{eq:upper_bound_of_sum} for the second and third inequalities, and $\sum_{j=1}^{J} 2^{j+3} \leq 2^{J+4} < \frac{16 (T+1)}{\sqrt{T}} \leq 4 (T+1)$ together with $\sum_{j=1}^{J} 2^{-(j-1)(1+\nu)} \leq 2$ for the last inequality.
For the remaining iterations $t > \max \mathcal{I}_J \geq s_J$, we similarly have
\begin{align*}
    \sum_{t = 1 + \max \mathcal{I}_J}^{T-1} \frac{4}{(T - t)^2} \sum_{s = t+1}^T \tilde{\eta}_s \| \nabla f (\vx_s) \|
    &\leq \sum_{t = 1 + \max \mathcal{I}_J}^{T-1} \frac{4}{(T - t)^2} \sum_{s = s_J}^T \tilde{\eta}_s \| \nabla f (\vx_s) \| \\
    &\leq 7 \sum_{s = s_J}^T \tilde{\eta}_s \| \nabla f (\vx_s) \| \\
    &\leq 161784 M_\nu \left( \frac{\| \vx_1 - \vx^\star\|^2}{T} \right)^{\frac{1 + \nu}{2}},
\end{align*}
where we use $4 \sum_{u=1}^{\infty} \frac{1}{u^2} = \frac{2 \pi^2}{3} \leq 7$ in the second inequality and \cref{lemma:upper_bound_of_e} in the last inequality.
Finally, we also have
\begin{align*}
    v_T \tilde{\eta}_T^2 = \tilde{\eta}_T \| \nabla f (\vx_T) \| \leq \sum_{t = s_J}^T \tilde{\eta}_t \| \nabla f (\vx_t) \|
    \leq 23112 M_\nu \left( \frac{\| \vx_1 - \vx^\star\|^2}{T} \right)^{\frac{1 + \nu}{2}},
\end{align*}
where we use \cref{lemma:upper_bound_of_e} in the last inequality.
Combining the above inequalities, we obtain
\begin{align*}
    2 \left( f (\vx_T) - f (\vx^\star) \right)
    &\leq \left( 16 + 64 + 369792 + 161784 + 23112 \right) M_\nu \left( \frac{\| \vx_1 - \vx^\star\|^2}{T} \right)^{\frac{1 + \nu}{2}} \\
    &= 554768 \, M_\nu \left( \frac{\| \vx_1 - \vx^\star\|^2}{T} \right)^{\frac{1 + \nu}{2}},
\end{align*}
which is the desired result.
\end{proof}

\end{document}